\documentclass{amsart}
\usepackage{amssymb,latexsym}
\usepackage{enumerate}
\usepackage{color}
\usepackage{graphicx}

\makeatletter
\@namedef{subjclassname@2010}{%
  \textup{2010} Mathematics Subject Classification}
\makeatother

%% If you are using letters of the Polish alphabet, add \usepackage[T1]{fontenc}
%% E.g. the name "Zoladz" is then coded \.Zo{\l}\k{a}d\'z

%% Numbered objects of "theorem" style (text italicized).
%% The optional parameters indicate that all objects are numbered together, and "by section"

\newtheorem{thm}{Theorem}[section]
\newtheorem{cor}[thm]{Corollary}
\newtheorem{lem}[thm]{Lemma}

\newtheorem{que}{Question}[section]

%% A numbered theorem with a fancy name:

%% Numbered objects of "non-theorem" style (text roman):

\theoremstyle{definition}
\newtheorem{defn}[thm]{Definition}
\newtheorem{rem}[thm]{Remark}

%% An unnumbered remark:

%% Equations numbered by section:

\numberwithin{equation}{section}

\DeclareMathOperator{\dist}{dist}
\DeclareMathOperator{\diam}{diam}
\DeclareMathOperator{\End}{End}
\DeclareMathOperator{\Per}{Per}
\DeclareMathOperator{\Int}{Int}
\DeclareMathOperator{\id}{id}

\newcommand{\D}{\mathcal{D}}
\newcommand{\C}{\mathcal{C}}

\newcommand{\N}{\mathbb{N}}

\newcommand{\orbp}{\mbox{Orb}^+}

\newcommand{\set}[1]{\left\{#1\right\}}
\newcommand{\eps}{\varepsilon}
\newcommand{\ra}{\rightarrow}
\renewcommand{\mod}{\mbox{mod }}
%%%%%%%%%%%%%

\newcommand{\fun}[3]{#1 \colon #2 \to #3}

\newcommand{\m}[1]{\mathbb{#1}}

\newcommand{\cl}[2][X]{\mathrm{Cl}_{#1}\!\left(#2\right)}
\newcommand{\ord}[2][X]{\mathrm{ord}_{#1}\!\left(#2\right)}
\newcommand{\inte}[2][X]{\mathrm{Int}_{#1}\!\left(#2\right)}
\newcommand{\bd}[2][X]{\mathrm{Bd}_{#1}\!\left(#2\right)}

\numberwithin{enumi}{section}
\subjclass[2010]{37E99,37B20,37B45}
\keywords{dendrite, transitive, periodic point, periodic decomposition, almost meshed continuum,hyperspace}
\begin{document}

\title{Periodic points and transitivity on dendrites}
\author[G. Acosta]{Gerardo Acosta}
\address[G. Acosta]{Inst\'{\i}tuto de Matem\'{a}ticas, Universidad Nacional Aut\'{o}noma de M\'{e}xico, Ciudad Universitaria, D.F. 04510, Mexico}
\author[R. Hern\'andez-Guti\'errez]{Rodrigo Hern\'andez-Guti\'errez}
\address[R. Hern\'andez-Guti\'errez]{UAP Cuautitl\'an Izcalli, Universidad Aut\'onoma del Estado de M\'exico. Paseos de las Islas S/N, Atlanta 2da. secci\'on. 54740, Cuautitl\'an Izcalli, M\'exico.  }
\email{rjhernandezg@uaemex.mx}
\author[I. Naghmouchi]{Issam Naghmouchi}
\address[I. Naghmouchi]{University of Carthage, Faculty of Sciences of Bizerte,
Department of Mathematics, Jarzouna, 7021, Tunisia}
\email{issam.nagh@gmail.com}
\author[P.~Oprocha]{Piotr Oprocha}
\address[P. Oprocha]{AGH University of Science and Technology\\
Faculty of Applied Mathematics\\
al. A. Mickiewicza 30, 30-059 Krak\'ow,
Poland -- and -- Institute of Mathematics\\ Polish Academy of Sciences\\ ul. \'Sniadeckich 8, 00-956 Warszawa, Poland} \email{oprocha@agh.edu.pl}

\begin{abstract}
We study relations between transitivity, mixing and periodic points on dendrites. We prove that when there is a point with dense orbit which is not an endpoint,
then periodic points are dense and there is a terminal periodic decomposition (we provide an example of a dynamical system on a dendrite with dense endpoints satisfying this assumption). We also show that it may happen that all periodic points except one
(and points with dense orbit) are contained in the
(dense) set of endpoints. It may also happen that dynamical system is transitive but there is a unique periodic point, which in fact is the unique fixed point. We also prove that on almost meshed-continua (a
class of continua containing topological graphs and dendrites with closed or countable set of endpoints), periodic points are dense if and only if they are dense for the
map induced on the hyperspace of all nonempty compact subsets.
\end{abstract}
\maketitle
%%%%%%%%%%%%%%%%%%%%%%%%%%%%%%%%%%%%%%%%%%%%%%%%%%%%%%%%%%%%%%%%%%%%%%%%%%%%%%%%%%%%%%%%%%%%%%%%%%%%%%%%%%%%%%%%%%%%%
\section{Introduction}

Topological transitivity is a global property of dynamical system. Under the assumption of transitivity the whole space becomes a global
attractor, making the dynamics indivisible in some sense. It also represents a minimal requirement to be able to speak about mixing in the dynamics.
Hence, it is not surprising that transitivity is an important ingredient of
global (and topological) definitions of chaos \cite{BlaSurv}.
One of the most known definitions of chaos was introduced by Devaney in his book \cite{Dev}. In this definition, a map is chaotic when it is transitive, sensitive
and contains a dense subset consisting of periodic points. It was soon noticed, that when periodic points are dense and the space is infinite (in particular, when the space is a non-degenerate continuum), transitivity implies sensitivity (see \cite{BanksDev} or \cite{GWSens}). It is well know (and not hard to prove \cite{hko_short}) that
for dynamical systems on the unit interval, transitivity implies that periodic points are dense. In other words, each transitive dynamical system on the unit interval is chaotic in the
sense of Devaney. It is also known that if the dynamical system $([0,1],f)$ is transitive but not mixing, then $[0,1]$ decomposes in two proper subintervals $I_0=[0,p]$, $I_1=[p,1]$
such that $p$ is a fixed point, $f(I_i)=I_{i+1\; (\text{mod }2)}$ and $f^2|_{I_i}$ is mixing (for $i=0,1)$. It can also be proven (see \cite{CH}), that on the unit interval, closure of the set of periodic points coincides with the closure of the set of recurrent points (so-called $\overline{R}=\overline{P}$ property).
In general, dynamics on the unit interval is well understood and many beautiful and surprising results can be proved in that context \cite{BC,ALM} (e.g. the celebrated Sharkovsky theorem). An interesting exposition on transitivity and its connection to other notions, with special emphasis on one-dimensional dynamics, can be found in the survey paper by Kolyada and Snoha \cite{KolSn}.

A natural questions that comes to mind when considering one-dimensional dynamics is to what extent these results can be generalized beyond the unit interval and what are the minimal assumptions that suffice
for these results to hold. A natural directions of research is to consider a class of ''similar'' one-dimensional continua and actually it is the way how it was done in the literature. Many authors investigated dynamical properties (related to density of periodic points) on topological trees, the circle and next on topological graphs. In many cases (after appropriate adjustment)
this direction was successful (e.g. see appendix in \cite{ALM}). A good example of necessary adjustments is the $\overline{R}=\overline{P}$ property. It is still true when we consider topological trees, but on topological graphs it is no longer valid (e.g. consider an irrational rotation of the unit circle). However it is possible to prove
in that case that $\overline{\mbox{Rec}(f)}=\mbox{Rec}(f)\cup \overline{\mbox{Per}(f)}$, where $\mbox{Per}(f)$ and $\mbox{Rec}(f)$ denote the set of periodic and recurrent points, respectively (see \cite{Blokh} or more recent \cite{MS}).
Of~course, while the result looks similar, usually there was necessary an essential modification of the arguments in the proof. In some other cases only partial answers are
known (e.g. characterization of the structure of periods of maps on topological graphs is far from being complete). In spite of technical difficulties, in the case of topological graphs dynamical properties remain to large extent ''similar'' to those known from the unit interval.

The first serious problems arise when we consider dendrites. On these (one-dimensional) continua various counterexamples to facts known from the unit interval (or topological trees) can be constructed and concrete properties hold only in special cases. For example, if a dendrite has countable set of endpoints then the $\overline{R}=\overline{P}$ property holds (see \cite{Ma2}), but when the set of endpoints points is uncountable then a counterexample can be constructed \cite{Kato}.
It was recently proved by Dirb\'{a}k, Snoha and \v{S}pitalsk\'y in \cite{DSS} that in the case of dynamical systems on continua with a free interval (in particular, on dendrites whose endpoints are not dense),
a transitive map always has dense set of periodic points and has a terminal periodic decomposition. Furthermore, an appropriate iterate restricted to any of the sets in the decomposition is mixing. On the other hand, recently there was constructed an example of a dendrite and a weakly mixing map on it which is not mixing \cite{MoHo}.
It is also known that there are dynamical systems on dendrites without a maximal $\omega$-limit set \cite{Kocan}, while for dynamical system
on a topological graph, every $\omega$-limit set is contained in a maximal one.

Motivated by the above mentioned results for topological trees (and difficulties in the case of dendrites) we state the following questions for the future research.
\begin{que}
Does every transitive dynamical system on a dendrite admit a terminal periodic decomposition?
\end{que}
\begin{que}
Provide a complete characterization of transitive dynamical
systems on dendrites with dense periodic points.
\end{que}

While we do not know complete answers to the above questions, we are able to provide some partial answers.
In Theorem~\ref{main} we prove that if a dynamical system on a dendrite has at least one point with dense orbit which is not
an endpoint then the set of periodic points of $f$ is dense in $D$.
We also prove that it may happen that there is a unique periodic point (thus a fixed point) but the map is transitive (construction from \cite{MoHo}
serves as an example), or the set of periodic points (as well as point with dense orbit), while dense, is contained in the set of endpoints.
We also provide an example of a map (and a dendrite with dense set of endpoints) which satisfies the assumptions of Theorem~\ref{main}.
This leads us to another open question.

\begin{que}
Does there exist a dendrite $D$ and a mixing dynamical system on it which does not have dense periodic points?
\end{que}

In Section~\ref{sec:decomp} we prove that a totally transitive dynamical system on a dendrite which has periodic points
that are not end points at the same time must be mixing. We also prove that at least one point with dense orbit which is not
an endpoint is sufficient for terminal decomposition.

Finally, we prove that on almost meshed continua, periodic points of $(X,f) $ are dense iff they are dense for the induced dynamical system $(2^X,2^f)$ on the space of compact subsets.
the class of almost meshed continua, among others, contains all topological graphs and dendrites with closed set of endpoints.
It is worth mentioning here, that the dendrite presented by Kato \cite{Kato} is an almost meshed continuum
such that $(X,f)$ has a unique periodic point, while $(2^X,2^f)$ has two periodic points.

\section{Preliminaries}
Let $(X,d)$ be a compact metric space. The space of all continuous maps $f\colon X\to X$ will be always endowed with the complete metric
$\rho(f,g)=\sup_{x\in X} d(f(x),g(x))$.
A \emph{continuum} is a compact, connected and nonempty metric space.
Closure of a set $A\subset X$ is denoted $\overline{A}$ or $\cl A$ and boundary of $A$ is denoted $\bd A$.

An arc is a space homeomorphic to the interval $[0,1]$.
If $A$ is an arc in $X$ with endpoints $a$ and $b,$ then $A$ is said to be a \emph{free arc} in $X$ if the set
$A\setminus\{a,b\}$ is open in $X$.

\subsection{Continua and hyperspaces}

Let $X$ be a continuum, let $p\in X$ and let $\beta$ be a cardinal number.
We say that $p$ is of order $\beta$, denoted $\ord p$, if for each open neighborhood $U$ of $p$ and each cardinal number $\alpha<\beta$
there is a neighborhood $p\in V\subset U\subset X$ such that $\alpha< \# \bd V\leq \beta$.
If $\ord p=1$ then we say that $p$ is an \emph{endpoint} and if $\ord p>2$ then we say that $p$
is a \emph{branch point}.

 A continuum
$S$ is a star if there is a point $a\in S$ such that $S$ can be
presented as the countable union $S=\cup_{n=1}^{+\infty} B_n$ of
arcs $B_n$, each having $a$ as an end point and satisfying $\lim
diam(B_n)=0$ and $B_m\cap B_n=\{a\}$ when $m\neq n$. We will say that $B_n$
is a \emph{beam} of star $S$.

A \emph{(topological) graph} is a continuum which can be written as the union of finitely many arcs, any two of
which are either disjoint or intersect only in one or both of their endpoints. A \emph{tree}
is any graph without cycles.

A continuum $D$ is a \emph{dendrite} if any two distinct points in $D$ can be separated by a third point in $D$. An accessible presentation
of dendrites and their basic properties can be found in \cite{nadler92}. If $X$ is a dendrite, then we denote by $\End(X)$ the set of \emph{endpoints} of $X$.

Let $D$ be a dendrite and $a,b\in D$. We will denote the unique arc between $a$ and $b$ by $[a,b]$. We define
$$
[a,b) = [a,b] - \{b\}, \hspace{.3cm} (a,b] = [a,b] - \{a\} \hspace{.3cm} \mbox{and} \hspace{.3cm}
(a,b) = [a,b] - \{a,b\}.
$$
If $D$ is a dendrite, we will write $x\leq y\leq z$ if $x,y,z\in D$ and $y\in [x,z]$.
This relation can be extended to any number of points contained in an arc.
Clearly if $a<p<b$, $b<q<c$ and $[a,b]\cap [b,c]=\set{b}$ then $a<p<b<q<c$.

For any subcontinuum $Y$ of $D$ we define $r\colon D \to Y$ by letting $r(x)$ to be a unique point which is a point of any arc from $x$
to any point $y\in Y$. It is known that function $r$ is well defined and continuous (and, hence, is a retraction of $D$ onto $Y$). We say that $r$ is the \emph{first point map for $Y$} and denote it for convenience by $\pi_Y=r$.
By Theorem~10.27 in \cite{nadler92}, for every dendrite $D$ and every $\eps$ there exists a tree $Y_\eps\subset D$ such that
the first point map $\pi_{Y_\eps}$ satisfies $\rho(\pi_{Y_\eps},\id)<\eps$.

For a set $Y,$ the identity on $Y$ is denoted by $1_Y.$ The interior and the closure of subset $A$ of a topological
space $X$ are denoted by $\inte{A}$ and $\cl{A},$ respectively.

If $X$ and $Y$ are continua and $\fun{f}{X}{Y}$ is a continuous function, then $f$ is said to be \emph{weakly
confluent} if for every subcontinuum $Q$ of $Y$ there exists a continuum $C$ in $X$ such that $f(C) = Q.$

For a continuum $X$ we consider the hyperspace
$$
2^X = \{A \subset X \colon A \mbox{ is non-empty and closed in } X\}
$$
\noindent endwed with the Hausdorff metric (\cite[Theorem 4.2, p. 53]{nadler92}). If $A \subset X,$ we define
$\langle A \rangle = \{B \in 2^X \colon B \subset A\}.$ It is known that if
$A$ is an open subset of $X,$ then $\langle A \rangle$ is open in $2^X$ (\cite[Theorem 4.5, p. 54]{nadler92}).

If $\fun{f}{X}{Y}$ is a continuous map, then we can consider the function $\fun{2^f}{2^X}{2^Y}$ defined,
for $A \in 2^X,$ by $2^f(A) = f(A).$ It is known that $2^f$ is continuous.

\subsection{Dynamical systems}
A \emph{dynamical system} is a pair $(X,f)$ consisting of compact metric space $(X,d)$ and a continuous map $f\colon X\ra X$.
For such system we define
$f^1 = f$ and $f^{n+1} = f^{n} \circ f,$ for each $n \in \m{N}.$
The \emph{orbit of $x\in X$} is the set $\orbp(x)=\set{f^n(x)\: :\: n\in \N}$. We say that $f$ is \emph{minimal} if the orbit of every $x\in X$ is a dense subset of $X$. A subset $M\subset X$ is a minimal set if and only if $(M,f_{|M})$ is a minimal system.

A point $x$ is \emph{periodic} if $f^n(x)=x$ for some $n>0$. When $n=1$ we say that $x$ is a \emph{fixed point}.
The \emph{period} of $p$ is the
least natural number $m$ such that $f^m(p) = p.$ If $(X,f)$ is a dynamical system, we denote by
$\mbox{Per}(f)$ the set of periodic points of $f.$

A dynamical system $(X,f)$ is \emph{transitive} (respectively \emph{mixing}) if for any pair of nonempty open sets $U, V \subset X$ there exists $n > 0$ ($N>0$) such that $f^n(U) \cap V\neq \emptyset$ (for all $n\ge N$, respectively), \emph{totally transitive} if $f^n$ is transitive for all $n\geq 1$, \emph{weakly mixing} if $f\times f$ is transitive on $X\times X$, and \emph{exact} if for any non-empty open subset $U$ of $X$ there is $N>0$ such that $f^N(U)=X$.
It is well known that if $(X,f)$ is transitive then the set $\set{x : \overline{\orbp(x)}=X}$ is \emph{residual} in $X$ (i.e the intersection of countably many sets with dense interiors).
An interesting exposition on transitivity, including dynamical systems in dimension one, can by found in survey paper by Kolyada and Snoha
\cite{KolSn}.

\section{Transitivity implies dense periodic points in some cases}

The following fact is folklore. We present its proof for completeness.

\begin{lem}\label{lem:res}
Let $D$ be a dendrite. If $\End(D)$ is dense in $D$ then $\End(D)$ is residual.
\end{lem}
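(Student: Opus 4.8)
The plan is to show that the complement $D\setminus\End(D)$ is an $F_\sigma$ set. Since $\End(D)$ is assumed dense, this will immediately give that $\End(D)$ is a dense $G_\delta$, and a dense $G_\delta$ is residual in the sense used in the paper (an intersection of countably many sets with dense interiors).

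First I would identify $D\setminus\End(D)$ with the set of cut points of $D$: a point of a dendrite has order $1$ exactly when it is an endpoint, and every point of order $\ge 2$ separates $D$, so $p\in D\setminus\End(D)$ if and only if $D\setminus\{p\}$ is disconnected. I would then fix a countable dense set $Q=\{q_1,q_2,\dots\}\subset D$ and prove that
\[
D\setminus\End(D)=\bigcup_{i,j}(q_i,q_j).
\]
The inclusion $\supseteq$ is immediate, since any point lying in the interior of an arc $[q_i,q_j]$ is a cut point and hence not an endpoint. For $\subseteq$, take a non-endpoint $p$; then $D\setminus\{p\}$ has at least two components, and because a dendrite is locally connected these components are open, so each contains a point of $Q$. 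Choosing $q_i,q_j$ in two distinct components forces the connected set $[q_i,q_j]$ to meet $p$ (a connected subset of $D\setminus\{p\}$ cannot meet two different components), and since $p\neq q_i,q_j$ we get $p\in(q_i,q_j)$.

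Next I would check that each $(q_i,q_j)$ is $F_\sigma$ in $D$: the arc $[q_i,q_j]$ is homeomorphic to $[0,1]$, and under such a homeomorphism $(q_i,q_j)$ is the image of $(0,1)=\bigcup_n[1/n,1-1/n]$, hence a countable union of compact, and therefore closed, subsets of $D$. Consequently $D\setminus\End(D)$ is a countable union of $F_\sigma$ sets, so it is $F_\sigma$, say $D\setminus\End(D)=\bigcup_n K_n$ with each $K_n$ closed. Then $\End(D)=\bigcap_n\left(D\setminus K_n\right)$ is $G_\delta$. Finally, since $\End(D)$ is dense, each open set $D\setminus K_n$ contains the dense set $\End(D)$ and is therefore itself dense with dense interior; thus $\End(D)$ is an intersection of countably many open dense sets, i.e.\ residual.

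The main obstacle is the identification $D\setminus\End(D)=\bigcup_{i,j}(q_i,q_j)$, and specifically the justification that the components of $D\setminus\{p\}$ are open so that they meet the countable dense set $Q$; this is exactly where local connectedness of the dendrite enters. The remaining steps—that an arc minus its endpoints is $F_\sigma$, and that a dense $G_\delta$ satisfies the stated notion of residuality—are routine.
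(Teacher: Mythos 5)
Your proof is correct, but it takes a genuinely different route from the paper's. The paper writes $D$ as the closure of an increasing union of trees $T_n$, notes that each $T_n$ can contain only finitely many endpoints of $D$ so that $D\setminus T_n$ is open and dense, and then shows every non-endpoint $q$ lies in some $T_n$ (since $D\setminus\{q\}$ has two nonempty open pieces and some $T_n$ meets both); the conclusion is that the dense $G_\delta$ set $\bigcap_n (D\setminus T_n)$ is \emph{contained in} $\End(D)$. You instead prove that $\End(D)$ is \emph{itself} a $G_\delta$, by identifying the non-endpoints with the cut points and writing them as $\bigcup_{i,j}(q_i,q_j)$ over a countable dense set $Q$, each open arc being $F_\sigma$. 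The key separation step is the same in spirit (a non-endpoint is trapped between two members of a countable dense structure, with local connectedness guaranteeing the components of $D\setminus\{p\}$ are open), and your uses of uniqueness of arcs in a dendrite and of the equivalence ``endpoint $\Leftrightarrow$ non-cut-point'' are standard and correctly applied. What your approach buys is a slightly stronger structural statement --- $\End(D)$ is a $G_\delta$ in \emph{any} dendrite, with density only invoked at the very end --- and it avoids the tree-approximation of dendrites; the paper's argument is shorter but only exhibits a residual subset of $\End(D)$, which is all the lemma requires.
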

\begin{proof}
First note that since $D$ contains no isolated points, for any finite subset $A\subset D$ the set $\End(D)\setminus A$ is dense in $D$.
 We can present $D$ as the closure of the  union of an increasing sequence of trees $\set{T_n:n\in \N}$.
 Each tree $T_n$ contains at most finitely many points from $\End(D)$ and so $D\setminus T_n$ contains $\End(D)\setminus A_n $ where $A_n=\End(T_n)\cap \End(D)$. But $A_n$ is finite and so $D\setminus T_n$ is an open dense subset of $D$.
 If $q\in D\setminus \End(D)$ then $D\setminus \set{q}$ consists of two nonempty open sets.
 In particular, there is $n$ such that $T_n$ intersects these two open sets, and so $q\in T_n$.
 This implies that
 $$
\bigcap_{n=1}^\infty D\setminus T_n = D\setminus \bigcup_{n=1}^\infty T_n\subset \End(D) \setminus \bigcup_{n=1}^\infty T_n\subset \End(D)
 $$
 is residual, finishing the proof.
\end{proof}

\begin{cor}\label{cor33}
Let $D$ be a dendrite and assume that $f\colon D\to D$ is transitive. If $\End(D)$ is dense in $D$ then $\set{x\in \End(D) : \overline{\orbp(x)}=D}$ is residual in $D$ (in particular, is dense in $D$).
\end{cor}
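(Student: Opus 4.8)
The plan is to exhibit the set in question as the intersection of two residual subsets of $D$ and then invoke the Baire category theorem. The two residual sets are, on one hand, the set of points with dense orbit and, on the other hand, the set of endpoints.

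First, I would recall the fact quoted in the preliminaries: since $f$ is transitive and $D$ is a compact metric space, the set $T=\set{x\in D : \overline{\orbp(x)}=D}$ of points with dense orbit is residual in $D$. Next, since we assume $\End(D)$ is dense in $D$, Lemma~\ref{lem:res} applies directly and yields that $\End(D)$ is residual in $D$. The set we wish to control is exactly
$$
\set{x\in \End(D) : \overline{\orbp(x)}=D} = \End(D)\cap T,
$$
the intersection of these two residual sets.

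Finally, I would use that a finite (indeed countable) intersection of residual sets is again residual: writing each of $\End(D)$ and $T$ as a countable intersection of sets with dense interior, their common intersection is again a countable intersection of such sets. Since $D$ is a continuum, hence a compact (in particular complete) metric space and therefore a Baire space, every residual subset is dense; thus $\End(D)\cap T$ is residual and dense, which gives both assertions of the corollary.

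There is essentially no hard step here: the entire content has already been packaged into Lemma~\ref{lem:res} together with the standard fact that transitivity forces the set of points with dense orbit to be residual. The only point that merits an explicit line is that the intersection of two residual sets is again residual and that residual sets are dense in the Baire space $D$, both of which are routine.
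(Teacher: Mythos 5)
Your proof is correct and follows exactly the route the paper intends: the paper states this as an immediate corollary of Lemma~\ref{lem:res} combined with the standard fact (quoted in its preliminaries) that transitivity makes the set of points with dense orbit residual, and the conclusion is the intersection of these two residual sets in the Baire space $D$. Nothing is missing.
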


Unfortunately, it may happen that $\set{x\in D : \overline{\orbp(x)}=D}\subset \End(D)$. Example of such a dendrite and a transitive map acting on it is
presented in Section~\ref{sec:endpointstransitivity}. In what follows, we will be concerned with a special class of maps on dendrites with dense endpoints.

%\begin{lem}\label{lemmastep}
%Let $D$ be a dendrite, let $p\in D$ and let $g\colon D\to D$ be a continuous map. If $g(p)\neq p$ then there exists $r\in [g(p),p)$ such that $\pi_{[g(p),p]}g(r)=r$.
%\end{lem}
%\begin{proof}
%The continuous map $\pi_{[g(p),p]}\circ (g|_{[g(p),p]})\colon [g(p),p]\to [g(p),p]$ has a fixed point $r\in [g(p),p]$.
%\end{proof}

The following fact is stated in \cite[Lemma 2.3]{Ma2} and is an easy consequence of \cite[Theorem~10.27(5)]{nadler92}.
We present its proof for completeness.

\begin{lem}\label{l2}
Let $(C_{i})_{i\in\mathbb{N}}$ be a sequence of connected (not necessarily closed) subsets of
a dendrite $(D,d)$. If $C_{i}\cap C_{j} = \emptyset$ for all $i\neq
j$, then $$\lim_{n\to +\infty}\mathrm{diam}(C_{n})=0.$$
\end{lem}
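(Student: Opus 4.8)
The plan is to argue by contradiction, reducing the claim to a statement about pairwise disjoint arcs and then locating a single point forced to lie in infinitely many of them. Suppose $\diam(C_n)\not\to 0$. Then there is a $c>0$ and an infinite set of indices along which $\diam(C_n)\ge c$; passing to this subsequence and relabelling, I may assume $\diam(C_n)\ge c$ for all $n$. For each $n$ I would choose $a_n,b_n\in C_n$ with $d(a_n,b_n)>c/2$. The first key step is to observe that connected subsets of a dendrite are \emph{arc-convex}: if $p\in(a_n,b_n)$ then $p$ separates $a_n$ from $b_n$ in $D$, so were $p\notin C_n$ the connected set $C_n\subset D\setminus\{p\}$ would meet two distinct components of $D\setminus\{p\}$, a contradiction; hence $[a_n,b_n]\subset C_n$. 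Since the $C_n$ are pairwise disjoint, so are the arcs $[a_n,b_n]$.

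Next I would use compactness of $D$ to extract a further subsequence (again relabelled) with $a_n\to a$ and $b_n\to b$. Then $d(a,b)=\lim_n d(a_n,b_n)\ge c/2>0$, so $a\neq b$ and the open arc $(a,b)$ is nonempty. Fixing any $p\in(a,b)$, the point $p$ separates $a$ and $b$; let $U$ and $V$ be the components of the open set $D\setminus\{p\}$ containing $a$ and $b$ respectively, which are open because a dendrite is locally connected. As $a_n\to a\in U$ and $b_n\to b\in V$, for all sufficiently large $n$ one has $a_n\in U$ and $b_n\in V$, so $a_n$ and $b_n$ lie in distinct components of $D\setminus\{p\}$.

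The conclusion then follows: for every such large $n$ the connected arc $[a_n,b_n]$ cannot avoid $p$ (otherwise it would lie in $D\setminus\{p\}$, hence in a single component, contradicting that its endpoints are separated), so $p\in[a_n,b_n]\subset C_n$. This forces $p$ into infinitely many of the pairwise disjoint sets $C_n$, which is impossible. I expect the only genuinely delicate point to be the justification of arc-convexity and of the separating property of interior points of an arc; both rest on the standard fact that in a dendrite each $p\in(a,b)$ disconnects $a$ from $b$, together with local connectedness to guarantee that the relevant components are open. An alternative route, closer to the cited \cite[Theorem 10.27(5)]{nadler92}, would replace the limit argument by the tree approximation $\pi_{Y_\eps}$, projecting the disjoint arcs into a finite tree and forcing a bounded-below overlap there; but the separation argument above avoids any length estimate and seems cleanest.
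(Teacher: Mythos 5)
Your proof is correct, but it follows a genuinely different route from the one in the paper. The paper's proof is global and quantitative: for each $\eps>0$ it takes a tree $Y\subset D$ with first point map $\pi_Y$ satisfying $\rho(\pi_Y,\id)<\eps/3$, chooses finitely many cut points $q_1,\dots,q_n\in Y$ so that every component of $D\setminus\{q_1,\dots,q_n\}$ has diameter at most $\eps/2$, and concludes that any connected $C_i$ with $\diam(C_i)\ge\eps$ must contain one of the $q_j$, so by disjointness at most $n$ of the $C_i$ can be that large. You instead argue sequentially by contradiction: extract $a_n,b_n\in C_n$ with $d(a_n,b_n)>c/2$, use arc-convexity of connected subsets of a dendrite to get pairwise disjoint arcs $[a_n,b_n]\subset C_n$, pass to limits $a_n\to a$, $b_n\to b$ with $a\ne b$, and pin down a single point $p\in(a,b)$ that must eventually lie in every $[a_n,b_n]$, contradicting disjointness. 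Both proofs rest on the same underlying mechanism (a connected set of large diameter is forced to contain a distinguished cut point, and disjointness caps how many sets can do so), but yours trades the tree-approximation machinery of \cite[Theorem~10.27]{nadler92} for elementary facts about dendrites --- that each $p\in(a,b)$ separates $a$ from $b$, that components of $D\setminus\{p\}$ are open by local connectedness, and sequential compactness --- all of which the paper uses freely elsewhere, so nothing is circular. The paper's version yields the slightly more explicit statement that for each $\eps$ only finitely many $C_i$ have diameter at least $\eps$ (equivalent here, but sometimes convenient), while yours is self-contained and avoids any metric estimate on the tree approximation. Your steps all check out; the one point you flag as delicate (arc-convexity and the separation property) is indeed the only nontrivial input, and your justification of it is sound.
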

\begin{proof}
For every $\eps>0$ there is a tree $Y$ such that the first point map $\pi_Y\colon D\to Y$ satisfies $\rho(\pi_Y,\id)<\eps/3$.
Since every point in $Y\setminus \End(Y)$ disconnects $D$, we can find finitely many points $q_1,\ldots,q_n\in Y$
such that each connected component of $D\setminus\set{q_1,\ldots, q_n}$ has diameter bounded by $\eps/2$.
This shows that there are at most $n$ sets $C_i$ with $\diam (C_i)\geq \eps$ completing the proof.
\end{proof}

We will also need another result by the same authors, which is \cite[Theorem~2.13]{Ma2}.

\begin{lem}\label{lem:returnper}
Let $D$ be a dendrite and let $(D,f)$ be a dynamical system.
Let $[x, y]$ be an arc in $D$, and $U$ be the connected component
of $D\setminus \set{x, y}$ containing the open arc $(x, y)$. If
there exist $m,n>0$ such that $\set{f^m(x), f^n(y)} \subset U$
then $U$ contains a periodic point of $f$.
\end{lem}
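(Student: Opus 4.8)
The plan is to reduce the statement to a covering (Markov-type) relation for a single iterate of $f$ and then to force the periodic point that it produces into the open region $U$. First I record the geometry. Since $[x,y]$ is a nondegenerate arc, $\overline{U}=U\cup\set{x,y}$ is a subcontinuum of $D$, hence itself a dendrite, and because $U$ is open its boundary satisfies $\bd[D]{\overline{U}}\subseteq\set{x,y}$. Writing $r=\pi_{\overline{U}}$ for the first point map onto $\overline{U}$, this gives the crucial fact that $r(w)\in\set{x,y}$ for every $w\in D\setminus\overline{U}$: outside $\overline{U}$ the only gates back to $U$ are the two points $x$ and $y$. Accordingly I split $D\setminus U$ into the external pieces $E_x=\set{w:r(w)=x}$ and $E_y=\set{w:r(w)=y}$, so that an $f$-orbit can pass between $U$ and the rest of $D$ only through $x$ or through $y$.

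The engine of the proof is a covering lemma on the arc $J=[x,y]$. Suppose I can find $N\ge 1$ together with points $s,u\in J$ such that $f^N(s)$ lies strictly beyond the gate $x$ (i.e.\ $f^N(s)\in E_x\setminus\set{x}$) and $f^N(u)$ lies strictly beyond $y$. Composing with the first point map $\pi_J$ onto the arc and parametrising $J$ by $[0,1]$ with $x=0$, $y=1$, the overshoot forces $\pi_J\circ f^N$ to take the value $0$ at $s$ and the value $1$ at $u$; the displacement then changes sign strictly between $s$ and $u$, producing a point $t^*\in(x,y)$ fixed by $\pi_J\circ f^N$. If $f^N(t^*)\in J$ this is already a fixed point of $f^N$ in $(x,y)\subseteq U$; if instead $f^N(t^*)$ sits on a side branch hanging off $t^*$, that whole branch lies in $U$, and I reapply the fixed point property of dendrites inside the corresponding fibre to extract a genuine periodic point of $f^N$, and hence of $f$, lying in $U$.

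The substance of the argument is thus to manufacture such a two-sided overshoot from the two hypotheses $f^m(x)\in U$ and $f^n(y)\in U$. Here I would follow the two orbit segments $x,f(x),\dots,f^m(x)$ and $y,f(y),\dots,f^n(y)$: each starts at a gate, may wander through $E_x$, $E_y$ and $U$, and ends back inside $U$. Feeding these two excursions into the successive images $f^k(J)$ of the arc and using that images of subcontinua are subcontinua together with the betweenness relation $x<\cdot<y$, I aim to locate one power $N$ at which the image of a sub-arc of $J$ meeting $U$ runs past both gates at once, so that the covering lemma of the previous paragraph applies.

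I expect two steps to be the real obstacles. The first is reconciling the two possibly different return times $m\ne n$ into a single relation for one iterate $f^N$; one cannot simply pass to a common multiple, since $f^m(x)\in U$ says nothing about $f^{km}(x)$. The second, and more serious, is guaranteeing that the point produced actually lies in $U$ rather than collapsing onto the separating pair $\set{x,y}$. This is precisely why the tempting shortcut --- passing to the retracted map $g=r\circ f$ on $\overline{U}$ and invoking the fixed point property directly --- fails: each time an orbit leaves $\overline{U}$ the map $g$ snaps it back to a gate, so $g$ may funnel all of its recurrence onto $\set{x,y}$ even when $f$ has a genuine periodic orbit threading through $U$ and out through $E_x$ or $E_y$. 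Moreover a single return is not enough (if only the orbit of $x$ re-entered $U$, a map pushing everything monotonically toward and out past $y$, never to return, would have no periodic point in $U$); it is the simultaneous return of both gates that closes an excursion into the two-sided covering that pins a periodic point inside $U$.
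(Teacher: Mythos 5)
You should first be aware that the paper does not prove this lemma at all: it is quoted verbatim from Mai and Shi \cite{Ma2} (their Theorem~2.13), so there is no in-paper argument to measure yours against, and your attempt has to stand on its own. As it stands it does not. The geometric setup of your first paragraph is fine ($\overline{U}=U\cup\{x,y\}$, the first point map sends $D\setminus\overline{U}$ into $\{x,y\}$, and every point of $U$ retracts into the open arc $(x,y)$), but the ``engine'' contains an unjustified step at exactly the point where the dendrite case differs from the interval case. From a fixed point $t^*\in(x,y)$ of $\pi_J\circ f^N$ you may conclude only that $f^N(t^*)$ lies in the branch $B_{t^*}=\pi_J^{-1}(t^*)$; to upgrade this to a periodic point you invoke ``the fixed point property of dendrites inside the corresponding fibre,'' but that property applies to \emph{self}-maps, and neither $B_{t^*}$ nor any fibre in sight is $f^N$-invariant. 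Retracting again onto $B_{t^*}$ merely reproduces the same difficulty one level down. This side-branch phenomenon is the heart of the lemma and cannot be dispatched in one clause.

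The second gap is that the two-sided overshoot feeding the engine is never constructed: your third paragraph only states that you ``aim to locate'' a suitable power $N$. Worse, such an $N$ need not exist. If, for instance, $f(\overline{U})\subset U$, then no iterate of any point of $J$ ever enters $E_x\setminus\{x\}$ or $E_y\setminus\{y\}$, so the hypothesis of your engine is unsatisfiable; in that special case the conclusion is immediate ($f|_{\overline{U}}$ is a self-map of the dendrite $\overline{U}$, and its fixed point lies in $f(\overline{U})\subset U$), but your single mechanism does not cover it, and the general case sits between this trivial situation and the covering situation in a way your sketch does not control. A complete proof must, by a careful choice of sub-arcs and a case analysis on where their images fall relative to $E_x$, $E_y$ and $U$, produce either an invariant subcontinuum meeting $U$ whose retracted fixed point is forced off $\{x,y\}$, or a genuine arc-onto-arc covering to which the intermediate value argument applies; that content, which is what Mai and Shi actually supply, is absent here. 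Your final paragraph correctly diagnoses both obstacles, but diagnosing them is not the same as overcoming them.
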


Now we are ready to prove the main theorem of this section.

\begin{thm}\label{main}
Let $D$ be a dendrite and let $(D,f)$ be a dynamical system.
%If the set $\set{x\in D\setminus \End(D):\overline{\orbp(x)}=X}$ is dense in $X$
If there is $x\in D\setminus \End(D)$ such that $\overline{\orbp(x)}=X$
then the set of periodic points of $f$ is dense in $D$.
\end{thm}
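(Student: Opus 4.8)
The engine of the proof will be Lemma~\ref{lem:returnper}, combined with the following \emph{free return} observation: if the two endpoints $a,b$ of a nondegenerate arc both lie on the orbit of $x$, say $a=f^{i}(x)$ and $b=f^{j}(x)$, then the returning hypothesis of Lemma~\ref{lem:returnper} comes for free. Indeed, let $U$ be the component of $D\setminus\set{a,b}$ containing $(a,b)$. Since $D$ is locally connected, $U$ is a nonempty open set, so density of $\orbp(x)$ produces indices $N>i$ and $M>j$ with $f^{N}(x),f^{M}(x)\in U$; setting $m=N-i$ and $n=M-j$ gives $f^{m}(a),f^{n}(b)\in U$, and Lemma~\ref{lem:returnper} yields a periodic point inside $U$. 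The whole problem therefore reduces to a purely geometric task: \emph{given a nonempty open set $W$, exhibit $a,b\in\orbp(x)$ for which the component $U$ of $D\setminus\set{a,b}$ containing $(a,b)$ satisfies $U\subseteq W$.} Any such $U$ then contains a periodic point of $f$ lying in $W$, and as $W$ is arbitrary this gives density of $\Per(f)$.

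To produce such a pair I would first arrange smallness. Using \cite[Theorem~10.27]{nadler92} I fix a tree $Y\subseteq D$ with $\rho(\pi_{Y},\id)$ so small that every component of $D\setminus Y$ has diameter below a prescribed threshold; by Lemma~\ref{l2} the pairwise disjoint components of the complement of a finite subset are eventually small, so only finitely many ``branches'' near $W$ are large and their attaching points can be avoided. The decisive structural requirement is to choose $a,b$ in \emph{radial alignment} from a single gate: concretely, one looks for a cut point $w$ admitting a component $O$ of $D\setminus\set{w}$ with $\overline{O}\subseteq W$, and for orbit points $a,b\in O$ positioned so that one separates the other from $w$, i.e. $\pi_{[a,b]}(w)\notin(a,b)$. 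Under this alignment every point outside $O$ is routed through $w$ and hence projects via $\pi_{[a,b]}$ to an endpoint of $[a,b]$, so that $U\subseteq O\subseteq W$, exactly as required. This is the step where the hypothesis $x\notin\End(D)$ is indispensable: because $x$ is a cut point, its dense orbit must meet at least two distinct components of $D\setminus\set{x}$, so $\orbp(x)$ cannot be a ``radial antichain'' and the alignment can be realized; one exploits the straddling orbit points on opposite sides of $x$ (and, when $x$ has infinite order, the small components hanging at $x$) to seed the construction.

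The main obstacle, and the technical heart of the argument, is precisely the control of the size of the component $U$. The danger is that $U$ can \emph{leak} through a gate: if $a$ and $b$ happen to sit on different branches emanating from $w$, then $\pi_{[a,b]}(w)$ falls in the open arc $(a,b)$, the entire exterior of $O$ becomes attached to $(a,b)$, and $U$ is large — so $U\subseteq W$ fails no matter how close $a$ and $b$ are. Ruling this out forces the alignment condition, and the delicate part is to guarantee simultaneously (i) that $a,b$ lie on the \emph{orbit} of $x$ (needed for the free return), (ii) that they are radially aligned from a gate $w$ whose branch $O$ sits inside $W$, and (iii) that $O$ is small; reconciling (i)--(iii) is where the tree approximation $\pi_{Y}$ and Lemma~\ref{l2} must be used carefully to trap the orbit points in a thin, single-gated region. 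I expect this reconciliation to be the only genuinely hard point, and it is exactly the obstruction that cannot be surmounted when every dense orbit is contained in $\End(D)$ — the situation of the examples constructed later in the paper, which is why the non-endpoint assumption cannot be dropped.
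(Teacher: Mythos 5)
Your engine---Lemma~\ref{lem:returnper} together with the observation that the return hypothesis is automatic when both arc-endpoints have dense orbit---is exactly the engine of the paper's proof, and that part of your argument is correct. But there is a genuine gap precisely at the step you yourself flag as ``the only genuinely hard point'': you never actually produce, inside an arbitrary open set $W$, two points $a,b\in\orbp(x)$ for which the component of $D\setminus\set{a,b}$ containing $(a,b)$ lies in $W$. The ``radial alignment'' discussion describes the obstruction rather than overcoming it. Worse, the plan as stated may be unrealizable: the hypothesis only guarantees that $x$ itself is a cut point, while the later orbit points $f^n(x)$, $n\geq 1$, could a priori all be endpoints of $D$ (on a dendrite with dense endpoints the set $\End(D)$ is residual by Lemma~\ref{lem:res}, and generic dense-orbit points are endpoints by Corollary~\ref{cor33}); if $a\in\End(D)$ then $D\setminus\set{a,b}$ loses nothing at $a$, so the component containing $(a,b)$ contains everything except what hangs off $b$ and can never be forced into a small $W$. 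Insisting that \emph{both} arc-endpoints lie on the forward orbit is therefore a real structural obstruction, not a technicality.

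The paper closes this gap with two moves absent from your outline. First, it reduces to neighborhoods of $x$: if every neighborhood of $x$ meets $\Per(f)$, then for any open $W$ one picks $n$ with $f^n(x)\in W$ and pushes a periodic point near $x$ forward by $f^n$; this confines all the geometry to the one place where the cut-point hypothesis is available. Second, and decisively, the second arc-endpoint is taken to be a \emph{preimage} of $x$ rather than a forward orbit point. One chooses, via Lemma~\ref{l2}, a point $q$ in a component $U$ of $D\setminus\set{x}$ avoiding a fixed point $p$ so that the component $V$ of $D\setminus\set{x,q}$ containing $(x,q)$ has diameter less than $\eps$; takes $N$ with $f^N(x)\in V$; and, since the orbit of $x$ accumulates on $p\notin U$, finds a last $k$ with $f^{kN}(x)\in U$ and $f^{(k+1)N}(x)\notin U$, whence
$$
x\in[f^{kN}(x),f^{(k+1)N}(x)]\subset f^{kN}\bigl([x,f^N(x)]\bigr)
$$
and some $y\in(x,f^N(x))\subset V$ satisfies $f^{kN}(y)=x$. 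This $y$ has dense orbit not because it lies on $\orbp(x)$ but because it maps onto $x$, and by construction the pair $\set{x,y}$ (or $\set{y,f^N(x)}$) bounds a component contained in $V$, to which Lemma~\ref{lem:returnper} applies. To repair your proof you must either import this preimage construction or actually prove your alignment claim; as written, the proposal does neither.
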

\begin{proof}
First note that since $x$ has dense orbit, it is enough to prove that every open neighborhood of
$x$ contains a periodic point.

Fix any $\eps>0$ and denote by $p$ a fixed point of $f$. Let $U$ be a connected component of $D\setminus \{x\}$ that does not contain $p$.
Fix any endpoint $e\in U\cap \End(D)$ and observe that $e\neq x$.
Since the set of branch points is at most countable in each dendrite, the arc $[e,x]$ contains at most countably many such points.
In particular, the set $D\setminus [e,x]$ has at most countably many connected components, and by Lemma~\ref{l2}
all but finite of them have diameter bounded by $\eps/2$. Hence, there is $q\in U$ such that, if we denote by $V$ the connected component of $X\setminus\{x,q\}$ that contains the open arc $(x,q)$, then $\diam (V) < \eps$.
Clearly $V\subset U$, so in particular $p\not\in V$. If we denote $V_1$ the connected component of $D\setminus \{q\}$ that contains $x$ and $V_2$ the connected component of $D\setminus \{x\}$ that contains $q$ then clearly $V=V_1\cap V_2$ and as $V_1$ and $V_2$ are open in $D$, thus $V$ is also open in $X$.

The orbit of $x$ is dense, thus there exists an integer $N>0$ such that $f^N(x)\in V$ and because $p$ is a fixed point and $p\in \omega_f(x)$, then also we have $p\in\omega_{f^N}(x)$.
%
%for any $\delta>0$
%there is $j>0$ such that $d(f^{j+i}(x),p)<\delta$ for $i=0,1,\ldots,2N$. Clearly, if $\delta$ is sufficiently small, we obtain that
%$f^{j+i}(x)\not\in V$ for $i=0,1,\ldots,2N$.
 This implies that there is $k\geq 1$ such that $f^{kN}(x)\in U$ and $f^{(k+1)N}(x)\not\in U$.

The map $f^{kN}$ is continuous, hence
$$
x\in [f^{kN}(x),f^{(k+1)N}(x)]\subset f^{kN}([x,f^{N}(x)])
$$
and since $x$ is not periodic, there is $y\in (x,f^{N}(x))$ such that $f^{kN}(y)=x$.
Clearly $y\in V$, and there are two possibilities:
\begin{enumerate}[(i)]
  \item If $y\in [q,x]$, we denote by $W$ the connected component of $D\backslash \{y,x\}$ containing $(y,x)$. By the definition, $W$ is contained in $V$ (hence an open subset of $V$) and by the fact that $x,y$ have dense orbits, there are $m,n>0$ such that $f^n(x),f^m(y)\in W$, and Lemma~\ref{lem:returnper} implies that there
      is a periodic point in $W\subset V$.
  \item If $y\notin [q,x]$ then $y$ and $f^N(x)$ are contained in the same connected component of $D\setminus [q,x]$, in particular the connected component $W'$ of $D\backslash \{y,f^N(x)\}$ containing $(y,f^N(x))$ is contained in $V$. Applying Lemma~\ref{lem:returnper}, we obtain a periodic point in $W'\subset V$.
  \end{enumerate}
We proved then that for each $\eps>0$, there is a periodic point $z$ with $d(x,z)\leq\eps$ and hence the proof is completed.
\end{proof}

As we will see later in section 5, there is transitive dendrite map with only one periodic point, hence by Theorem \ref{main}, any point with dense orbit in this example must be an endpoint.
\begin{cor}
Let $D$ be a dendrite and $f:D\to D$ be a continuous map. If $f$ is a transitive map without dense set of periodic point then any point $x$ with dense orbit is an endpoint of $D$.
\end{cor}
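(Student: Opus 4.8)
The plan is to recognize this corollary as simply the contrapositive of Theorem~\ref{main}, so that essentially no new argument is required. I would proceed by contradiction. Assume that $f$ is transitive and that $\Per(f)$ is not dense in $D$, and suppose toward a contradiction that some point $x$ with dense orbit is not an endpoint. Then $x\in D\setminus\End(D)$ and $\overline{\orbp(x)}=D$, which is exactly the hypothesis of Theorem~\ref{main}.

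Applying Theorem~\ref{main} to this very $x$ immediately yields that $\Per(f)$ is dense in $D$, contradicting our standing assumption. Hence no non-endpoint can have dense orbit, so every point $x$ with dense orbit must belong to $\End(D)$, as claimed.

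I expect no genuine obstacle here. Transitivity enters only to make the statement non-vacuous, since it guarantees, via the residuality of $\set{x:\overline{\orbp(x)}=D}$, that points with dense orbit exist at all; the logical core of the deduction is purely the contrapositive of the preceding theorem. The single point worth recording is that the hypothesis of Theorem~\ref{main}, namely the existence of a non-endpoint with dense orbit, is precisely the negation of the conclusion we wish to reach, which is what renders the implication immediate.
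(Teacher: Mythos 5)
Your proposal is correct and is exactly the argument the paper intends: the corollary is the contrapositive of Theorem~\ref{main}, and your observation that transitivity serves only to guarantee the existence of points with dense orbit matches the paper's treatment.
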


\section{Transitivity and periodic decompositions on dendrites}\label{sec:decomp}
In this section we follow approach to periodic decompositions introduced by Banks in \cite{Ban97}. It is worth mentioning that decompositions
on various one-dimensional continua were considered by numerous authors. In this context, especially appropriate it is to mention work Alsed\'{a}, del Rio and Rodr\'{\i}guez \cite{ARR} on periodic decompositions on topological graphs (see also \cite{BlokhSplit} and \cite{BM}).
The situation known for topological graphs (trees) is probably the best possible that can be expected on dendrites.

A set $D \subset X$ is \emph{regular closed} it it is the closure of its
interior.
A \emph{regular periodic decomposition (of length $m$)} for a dynamical system
$(X,f)$ is a finite sequence $\D = (D_0, \ldots ,D_{m-1})$ such that:
\begin{enumerate}
\item $\D$ is a cover of $X$ consisting of regular closed sets,
\item $f(D_i) = D_{i+1(\mod m)}$ for $i=0,\ldots,  m-1$
\item\label{c:dec3} $D_i \cap D_j$ is nowhere dense in $X$ for $i \neq j$.
\end{enumerate}
Notice that for any dynamical system $(X,f)$, the space $X$ forms  a trivial regular periodic decomposition of length $1$.
Using \eqref{c:dec3} and the fact that the
boundary of a regular closed set is nowhere dense we easily get that:
\begin{enumerate}[(i)]
\item \label{RPD1} $\Int(D_i) \cap D_j = \emptyset$ for $i \neq j$,
\item \label{RPD2} the boundary of each $D_i$ is nowhere dense,
\end{enumerate}
If $\D = (D_0, \ldots ,D_{m-1})$ is a regular periodic decomposition for a transitive dynamical system $(X,f)$ then (see \cite[Lemma 2.1
and Theorem 2.1]{Ban97}) we have the following:

\begin{enumerate}[(i)]
\setcounter{enumi}{2}
\item \label{RPD3} $f^l(D_i) = D_{i+l (\mod m)}$ for $i=0,1,\ldots, m-1$ and $l\geq 0$,
\item \label{RPD4} $f^{-l}(\Int(Di))\subset Ã‚Â\Int D_{i-l(\mod m)}$ for $i=0,1,\ldots, m - 1$ and $l \geq 0$,
\item \label{RPD5} $D = \bigcup_{i\neq j}D_i \cap D_j$
is closed, invariant and nowhere dense,
\item \label{RPD5}  $f^m$ is transitive on each $D_i$.
\end{enumerate}
If all the sets of $\D$ are connected we say that $\D$ is \emph{connected}. By \eqref{RPD3} and the fact that the
continuous image of a connected set is connected we get that $\D$ is connected if and only if
at lest one of the sets $D_i$ is connected.
By \cite[Theorem 2.4]{Ban97}, if $(X,f)$ is transitive but $(X,f^n)$ is not transitive for some $n>0$ then $(X,f)$ has a regular periodic decomposition of length $p>0$
where $p$ is a prime number dividing $n$.

Assume now that $\C=(C_0,\ldots, C_{n-1})$ and $\D = (D_0, \ldots ,D_{m-1})$ are regular periodic decompositions for $(X,f)$. We say that
$\C$ \emph{refines} (or is a \emph{refinement} of) $\D$ if every $C_i$ is contained in some $D_j$. Note that in such a case each element of $\D$
contains the same number of elements of $\C$, so $n$ is a multiple of $m$.
It was proved in \cite[Theorem 6.1]{Ban97} that if $X$ locally connected then every regular periodic decomposition for $(X,f)$ has a connected
refinement. By \cite[Lemma 3.2]{Ban97} if $(X,f)$ has regular periodic decompositions $\C$ of length $m$  and $D$ of length
$n$, then $(X,f)$ has a regular periodic decomposition of length $k = \text{lcm}(m, n)$ which is a
refinement of both $\C$ and $\D$. We will call a regular periodic decomposition \emph{terminal} if it is of maximal
length. There are situations where terminal decomposition exist. In such a case we have the following characterization, which is \cite[Theorem~3.1]{Ban97}.

\begin{thm}
A regular periodic decomposition $\D = (D_0, \ldots, D_{n-1})$ for a
transitive dynamical system $(X,f)$ is terminal iff $f^n$ is totally transitive on each $D_i$.
\end{thm}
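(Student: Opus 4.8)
The plan is to prove both implications through the machinery of Banks \cite{Ban97} recalled above, the two workhorses being \cite[Theorem 2.4]{Ban97} (a transitive system that is not totally transitive carries a nontrivial regular periodic decomposition of prime length) and \cite[Lemma 3.2]{Ban97} (any two regular periodic decompositions admit a common refinement of length equal to their least common multiple). Before either direction I would isolate an elementary \emph{transfer lemma}: if $Y$ is regular closed in $X$ and $E\subset Y$, then $E$ is regular closed in $X$ if and only if it is regular closed in $Y$; this is a short interior/closure computation resting on the fact that $\Int_X Y$ is dense in $Y$. It lets me pass regular periodic decompositions freely between the subsystem $(D_i,f^n)$ and the ambient system $(X,f)$. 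I would also record the abstract observation that a \emph{totally transitive} system $(Z,h)$ admits no regular periodic decomposition of length $k>1$: for such a decomposition $(G_0,\dots,G_{k-1})$ one has $h^k(G_0)=G_0$ with $G_0$ proper and $\Int G_0\neq\emptyset$, while $\Int G_1$ is a nonempty open set disjoint from $G_0$ (by the property $\Int G_i\cap G_j=\emptyset$); since every iterate of $h^k$ keeps $\Int G_0$ inside $G_0$, the map $h^k$ can never be transitive, a contradiction.

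For the implication $(\Leftarrow)$ assume $f^n$ is totally transitive on each $D_i$ and suppose, for contradiction, that some decomposition $\C$ has length $m>n$. By \cite[Lemma 3.2]{Ban97} there is a common refinement $\mathcal{F}$ of $\C$ and $\D$ of length $L=\operatorname{lcm}(m,n)\geq m>n$, so $L=nk$ with $k>1$. Writing $\mathcal{F}=(F_0,\dots,F_{L-1})$ with $f(F_t)=F_{t+1}$ and $F_t\subset D_{t\bmod n}$, the $k$ members $F_0,F_n,\dots,F_{(k-1)n}$ are exactly the ones lying in $D_0$; they are cyclically permuted by $f^n$, they cover $D_0$ (their union contains $\Int_X D_0$, because each $F_s$ with $s\not\equiv 0$ sits in some $D_j$ with $\Int_X D_0\cap D_j=\emptyset$), and their pairwise intersections are nowhere dense in $D_0$. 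By the transfer lemma each is regular closed in $D_0$, so $(D_0,f^n)$ carries a regular periodic decomposition of length $k>1$, contradicting the abstract observation applied to $(D_0,f^n)$. Hence no decomposition longer than $n$ exists and $\D$ is terminal.

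For $(\Rightarrow)$ I argue the contrapositive: assume $f^n$ fails to be totally transitive on some $D_i$ and produce a decomposition of $(X,f)$ of length $np>n$, contradicting terminality. Since $f^n$ is transitive on $D_i$, \cite[Theorem 2.4]{Ban97} yields a regular periodic decomposition $\mathcal{E}=(E_0,\dots,E_{p-1})$ of $(D_i,f^n)$ of prime length $p>1$, with $f^n(E_b)=E_{b+1}$. I then \emph{induce this decomposition up} along the $f$-orbit of $D_i$: for $0\leq a<n$ and $0\leq b<p$ set $P_{a,b}:=f^a(E_b)\subset D_{i+a}$. These $np$ closed sets cover $X$ since $\bigcup_b E_b=D_i$ and $\bigcup_a f^a(D_i)=X$, and $f$ permutes them cyclically with period exactly $np$, because $f(P_{a,b})=P_{a+1,b}$ for $a<n-1$ and $f(P_{n-1,b})=f^n(E_b)=E_{b+1}=P_{0,b+1}$.

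The main obstacle is to verify that the $P_{a,b}$ form a \emph{regular} periodic decomposition, i.e.\ that each $f^a(E_b)$ is regular closed in $X$ with pairwise nowhere dense intersections, since forward images under the merely continuous (not open) map $f$ need not preserve regular closedness. I would handle this by describing the interior of $P_{a,b}$ through \emph{preimages} rather than images: using $f^{-a}(\Int D_{i+a})\subset\Int D_i$ together with the analogous relation inside the subsystem $(D_i,f^n)$, one checks that the open set $Q_{a,b}:=f^{-(n-a)}(\Int E_{b+1})\cap\Int D_{i+a}$ satisfies $Q_{a,b}\subset\Int P_{a,b}$, and that the sets $Q_{a,b}$ are pairwise disjoint with $Q_{a,b}\cap P_{a,b'}=\emptyset$ for $b\neq b'$. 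Granting that $Q_{a,b}$ is \emph{dense} in $P_{a,b}$, one gets $\overline{\Int P_{a,b}}=P_{a,b}$ (regular closedness in $D_{i+a}$, upgraded to $X$ by the transfer lemma), and a short density argument then forces $\Int P_{a,b}\cap\Int P_{a,b'}=\emptyset$ within a layer, so the same-layer intersections are nowhere dense; across layers $P_{a,b}\cap P_{a',b'}\subset D_{i+a}\cap D_{i+a'}$ is already nowhere dense by $\D$. This produces a regular periodic decomposition of length $np>n$, violating terminality and completing the contrapositive. The density of $Q_{a,b}$ in $P_{a,b}$ is the one genuinely delicate point; everything else is bookkeeping with the listed properties of regular periodic decompositions.
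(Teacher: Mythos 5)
The paper does not actually prove this statement: it is quoted verbatim as \cite[Theorem~3.1]{Ban97}, so there is no in-paper argument to compare yours against, and I assess your proposal on its own terms. Your architecture is the right one: the transfer lemma for regular closedness is correct, the observation that a totally transitive system admits no regular periodic decomposition of length $k>1$ is correct, and the $(\Leftarrow)$ direction (pass to a common refinement of length $nk$ with $k>1$, restrict the $k$ pieces inside $D_0$ to get a nontrivial decomposition of $(D_0,f^n)$) is essentially complete --- the only points needing a word are that a set nowhere dense in $X$ and contained in the regular closed $D_0$ is nowhere dense in $D_0$, and that the $F_{jn}$ cover $D_0$ because they contain the dense set $\Int(D_0)$; you address both.

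The $(\Rightarrow)$ direction, however, has a genuine gap, and you name it yourself: you \emph{assume} that $Q_{a,b}=f^{-(n-a)}(\Int E_{b+1})\cap\Int D_{i+a}$ is dense in $P_{a,b}=f^a(E_b)$, which is exactly the regular closedness of the lifted pieces and hence the entire content of the step ``forward images of the $E_b$ form a regular periodic decomposition.'' Everything you do prove about the $Q_{a,b}$ (openness, $Q_{a,b}\subset\Int P_{a,b}$, disjointness from $P_{a,b'}$) is the easy half; without the density claim you cannot conclude $P_{a,b}=\overline{\Int P_{a,b}}$, nor that same-layer intersections are nowhere dense, and the contradiction with terminality evaporates. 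The claim is true, and the missing ingredient is the following sub-lemma, which your write-up never isolates: \emph{for a transitive system $(Y,g)$ with regular periodic decomposition $(E_0,\ldots,E_{p-1})$, the open set $g^{-1}(\Int E_{b+1})$ is dense in $E_b$.} Indeed, if some nonempty open $O\subset\Int E_b$ satisfied $g(O)\cap\Int E_{b+1}=\emptyset$, then $g(O)\subset E_{b+1}\setminus\Int E_{b+1}$, and every such boundary point lies in $E_{b+1}\cap E_j$ for some $j\neq b+1$ (the $E_j$ are finitely many closed sets covering $Y$), i.e.\ in the set $\bigcup_{j\neq j'}E_j\cap E_{j'}$, which is closed, nowhere dense and forward invariant; hence $g^m(O)$ stays inside it for all $m\geq 1$, contradicting transitivity of $g$. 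Applying this with $g=f^n|_{D_i}$, and pulling a neighbourhood $V$ of $y=f^a(x)$, $x\in E_b$, back to the nonempty open set $f^{-a}(V)\cap\Int E_b$ and pushing forward a point of $g^{-1}(\Int E_{b+1})$ found there, yields the density of $Q_{a,b}$ in $P_{a,b}$. With this lemma inserted, your proof closes; without it, the forward direction is not proved.
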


Note that every connected subset of dendrite is arcwise connected and every subcontinuum of dendrite is dendrite (for a
useful summary of properties of dendrites and their equivalent characterizations the reader is referred to \cite{Chara98}).

%The following fact is known for spaces with free interval, in particular a special class of dendrites (e.g. see \cite{DSS}). Here we prove it in more generality, covering also
%the case of some dendrites with dense set of endpoints.

\begin{thm}\label{thm:decomp_partial}
If $D$ is a dendrite and $(D,f)$ is transitive and there is no terminal decomposition for $f$
then for any $q\in D\setminus \End(D)$ there is a connected regular periodic decomposition $\D=(D_0, \ldots, D_{m-1})$ such that
$q\in D_i \cap D_j$ for some $i\neq j$.
\end{thm}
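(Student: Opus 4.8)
The plan is to argue by contradiction. Assume that $f$ is transitive, that there is no terminal regular periodic decomposition, and yet that the given $q\in D\setminus\End(D)$ lies in $\Int(D_i)$ for a (unique) index $i$ in \emph{every} connected regular periodic decomposition $\D=(D_0,\dots,D_{m-1})$. First I would unpack the hypothesis ``no terminal decomposition''. Since a dendrite is locally connected, Banks' connected-refinement theorem guarantees that every regular periodic decomposition has a connected refinement, and the $\mathrm{lcm}$-refinement lemma of \cite{Ban97} makes the family of connected decompositions directed under refinement; as no decomposition of maximal length exists, I can extract a sequence $\D^{(k)}=(D^{(k)}_0,\dots,D^{(k)}_{m_k-1})$ of connected regular periodic decompositions with $m_k\to\infty$, each refining the previous one. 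Writing $P_k=D^{(k)}_{i_k}$ for the piece whose interior contains $q$, the refinement relation forces the continua $P_k$ to be nested and decreasing.

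The engine of the argument is a divisibility obstruction coming from transitivity. If a nonempty open set $V$ is contained in $\Int(D_i)$ for a decomposition of length $m$, then by property~\ref{RPD3} any $l\ge 1$ with $f^{l}(V)\cap V\neq\emptyset$ satisfies $f^{l}(V)\subset D_{i+l\,(\mod m)}$, so property~\ref{RPD1} forces $m\mid l$. On the other hand, $D$ is a nondegenerate continuum and hence has no isolated points, so a point with dense orbit meets $V$ infinitely often and the first return time $l_0=\min\set{l\ge 1:f^l(V)\cap V\neq\emptyset}$ is a finite positive integer. Consequently no fixed nonempty open set can sit inside a single piece of decompositions of arbitrarily large length, for that would force $m_k\mid l_0$ for all $k$. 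Applying this to any open $V\subset\bigcap_k P_k$ shows $\Int\big(\bigcap_k P_k\big)=\emptyset$; in particular no fixed neighborhood of $q$ lies in all $P_k$, so $\dist(q,\bd{P_k})\to 0$ and the boundary points of the $\D^{(k)}$ accumulate at $q$. Choosing two components $U,W$ of $D\setminus\set{q}$, the subcontinuum $P_k$ contains an arc running from a point $c_k^U\in U$ through $q$ to a point $c_k^W\in W$, where $c_k^U,c_k^W\in\bd{P_k}$ and $c_k^U,c_k^W\to q$; since in a dendrite the complementary components of a subcontinuum attach at single cut points, each $c_k^U$, $c_k^W$ is a genuine attaching point, i.e.\ lies in $D^{(k)}_{i_k}\cap D^{(k)}_j$ for some $j\neq i_k$. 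Thus $q$ is pinched between honest decomposition-boundary points approaching it from both sides.

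The main obstacle is to promote this two-sided accumulation into a \emph{single finite} decomposition that actually has $q$ on a shared boundary: refining only pushes the attaching points $c_k^U,c_k^W$ toward $q$ without a priori placing one of them at $q$, and regular periodic decompositions are dictated by the dynamics, so one cannot freely insert a cut at $q$. The strategy I would pursue is to run Banks' construction directly on the separation induced by $q$: using the accumulation data and the invariance of the boundary set $\bigcup_{a\ne b}D^{(k)}_a\cap D^{(k)}_b$, locate a period $n$ for which open sets lying on opposite sides of $q$ witness the non-transitivity of $f^n$ (such $n$ must exist, since otherwise the return-time obstruction of the previous paragraph would bound all decomposition lengths and produce a terminal decomposition), and then check that the resulting decomposition has $q$ as an attaching point. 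The delicate points are selecting the correct period $n$ from the two-sided pinching and verifying that the pieces produced are regular closed and connected with nowhere dense overlaps satisfying $f(D_i)=D_{i+1}$; here Lemma~\ref{l2}, applied to the pairwise disjoint interiors of the successively refined sibling pieces, should furnish the diameter control needed to keep the pinched arc from degenerating uncontrollably and to stabilize the construction with $q\in\bd{D_i}$.
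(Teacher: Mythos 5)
Your setup and your ``engine'' are both essentially the paper's: you extract a nested sequence of connected regular periodic decompositions of unbounded length, and you correctly observe that transitivity gives any fixed nonempty open set a finite return time, which (by the rotation property $f^l(D_i)=D_{i+l(\mod m)}$ and disjointness of interiors) bounds the length of any decomposition having that set inside the interior of a single piece. The gap is in how you exploit this. You apply the obstruction only to open subsets of $\bigcap_k P_k$, conclude that this intersection has empty interior, and are then forced into the genuinely hard --- and unfinished --- task of manufacturing a new decomposition with $q$ on an overlap. The accumulation of the attaching points $c_k^U,c_k^W$ at $q$ never places $q$ itself in any set $D_i^{(k)}\cap D_j^{(k)}$ (each of these overlap sets is closed and nowhere dense, but they vary with $k$), and your proposed completion (``locate a period $n$ \ldots{} and check that the resulting decomposition has $q$ as an attaching point'') is precisely the statement to be proved, with no mechanism supplied for it.

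The missing idea is a fixed-point argument that makes your own engine terminate immediately, with no need to build anything. A dendrite has the fixed point property; let $p=f(p)$. Since $p$ lies in some piece $D_a$ of each decomposition and $f(D_a)=D_{a+1}$, the point $p$ lies in every piece, hence in the nowhere dense overlap set and in no interior; in particular $p\neq q$. Now let $V$ be a connected component of $D\setminus\{q\}$ not containing $p$. For any $z\in V$ and any piece $C$ of any decomposition in your sequence with $z\in C$, arcwise connectedness of the connected piece $C$ gives $[p,z]\subset C$, and $q\in[p,z]$ because $q$ separates $p$ from $z$; since $q\in\Int(P_k)$ and this interior is disjoint from every other piece, $C$ must be $P_k$. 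Hence the \emph{fixed} open set $V$ is contained in $\Int(P_k)$ for every $k$, so $\Int\bigl(\bigcap_k P_k\bigr)\supset V\neq\emptyset$, contradicting your own conclusion; equivalently, the minimal return time of $V$ bounds all the lengths $m_k$. This closes the proof, and your second and third paragraphs become unnecessary.
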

\begin{proof}
Since there is no terminal decomposition, there are numbers $n_1,n_2,\ldots$, where each $n_j\geq 2$ and connected regular periodic decompositions

$$
\D_k=\set{D_{i_1,i_2,\ldots, i_k} : i_j\leq n_j}
$$
of lengths $n_1n_2\ldots n_k$ such that $\D_{k+1}$ is a refinement of $\D_k$ and $D_{i_1,i_2,\ldots,i_k,i_{k+1}}\subset D_{i_1,i_2,\ldots,i_k}$. Fix $q\in D\setminus \End(D)$. We will prove that there is $k$ such that $q\in D_{i_1,i_2,\ldots,i_k}\cap D_{j_1,j_2,\ldots,j_k}$ for some $(i_1,i_2,\ldots,i_k)\neq(j_1,j_2,\ldots,j_k)$. Assume that, on the contrary, there are indices $r_1,r_2,\ldots$ such that $q\in\Int{D_{r_1,r_2,\ldots,r_k}}$ for every $k$. We shall arrive to a contradiction.

Every dendrite has the fixed point property, so there exists a fixed point $p\in D$. For every $k$ and $j_i\in\{0,\dots,n_i-1\}$ where $1\leq i\leq k$, we cannot have $p\in \Int D_{j_1,j_2,\ldots, j_k}$ because then $p\in \Int D_{j_1,j_2,\ldots, j_k} \cap f(D_{j_1+1 (\mod n_1)},j_2,\ldots, j_k)=\emptyset$. Then the only possibility is that $p\in \bigcap D_{j_1,j_2,\ldots, j_k}$. Clearly $p\neq q$. Let $V$ be a connected component (hence an open set) of $X\setminus \set{q}$ which does not contain $p$.

Fix $k$ for the moment, we will now prove that $V\subset D_{r_1,r_2,\ldots,r_k}$. Assume that this is not the case so there is $z\in V\cap D_{i_1,i_2,\ldots,i_k}$ where $(i_1,i_2,\ldots,i_k)\neq(r_1,r_2,\ldots,r_k)$. Since $p,z\in D_{i_1,i_2,\ldots,i_k}$ and $D_{i_1,i_2,\ldots,i_k}$ is arcwise connected, $[p,z]\subset D_{i_1,i_2,\ldots,i_k}$. Since $q\in[p,z]$, $q\in D_{i_1,i_2,\ldots,i_k}$ which is impossible because we are assuming that $q\in\Int{D_{r_1,r_2,\ldots,r_k}}$ and this set is disjoint from $D_{i_1,i_2,\ldots,i_k}$.

As $V$ is open in $X$ then $V\subset \Int D_{r_1,r_2,\ldots,r_k}$ for any $k$. By transitivity, there is $m>0$ such that $f^m(V)\cap V\neq \emptyset$. Choose $k\in\mathbb{N}$ such that $n_1n_2...n_k>m$. The number $m$ is smaller than the length of the decomposition $\D_k$, hence
$$
f^m(V)\cap V \subset f^m(D_{r_1,r_2,\ldots,r_k})\cap \Int D_{r_1,r_2,\ldots,r_k}=\emptyset
$$
which is a contradiction.

\end{proof}

\begin{rem}
Assume that $\C=(C_0,\ldots, C_{n-1})$ and $\D = (D_0, \ldots ,D_{m-1})$ are regular periodic decompositions for a dynamical system $(D,f)$
on a dendrite $D$ and assume that
$\C$ refines $\D$. If $i,j,k$ are such that
$D_i\cap D_j\neq \emptyset$ and $C_k\subset D_i$ then $C_k\cap D_j\neq \emptyset$ (see \cite{Ban97}). So if there is a dendrite without terminal periodic decomposition then in the view of Theorem~\ref{thm:decomp_partial} it seems to have very special structure.
\end{rem}

\begin{cor}\label{cor:decomp}
If $(D,f)$ is a transitive dynamical system on a dendrite $D$ and there is a point $q\in D\setminus \End(D)$
with dense orbit then
there is a terminal decomposition for $(D,f)$.
\end{cor}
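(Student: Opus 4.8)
The plan is to argue by contradiction, using Theorem~\ref{thm:decomp_partial} as the single essential tool. Suppose, contrary to the claim, that $(D,f)$ admits no terminal decomposition. Since we are given a point $q \in D \setminus \End(D)$, Theorem~\ref{thm:decomp_partial} applies to this very $q$ and produces a connected regular periodic decomposition $\D = (D_0, \ldots, D_{m-1})$ with $q \in D_i \cap D_j$ for some $i \neq j$. In particular $m \geq 2$, as a decomposition of length one has only a single member and could not satisfy $i \neq j$.

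Next I would invoke the properties of regular periodic decompositions for transitive systems listed before the theorem. Put $E = \bigcup_{i \neq j} D_i \cap D_j$; this set is closed, invariant and nowhere dense. By the previous paragraph $q \in E$. Forward invariance is immediate: if $x \in D_i \cap D_j$ then $f(x) \in f(D_i) \cap f(D_j) = D_{i+1\,(\mod m)} \cap D_{j+1\,(\mod m)}$, and the shifted indices remain distinct modulo $m$, so $f(x) \in E$. Hence $\orbp(q) \subset E$, and since $E$ is closed we obtain $\overline{\orbp(q)} \subset E$.

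This yields the contradiction at once. The set $E$ is nowhere dense, so it is a proper subset of $D$ with empty interior; in particular $E \neq D$. Therefore $\overline{\orbp(q)} \subset E \neq D$, which contradicts the hypothesis that $q$ has dense orbit. Consequently the assumption was false and $(D,f)$ must admit a terminal decomposition.

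I do not anticipate a genuine obstacle here, since the statement is a short deduction from Theorem~\ref{thm:decomp_partial} once the structural facts about regular periodic decompositions are in hand. The only points that merit a line of care are confirming that the decomposition supplied has length $m \geq 2$ (forced by the conclusion $q \in D_i \cap D_j$ with $i \neq j$) and recording the forward invariance of $E$, so that density of the orbit of $q$ can be played off against the nowhere-density of $E$; both are routine consequences of the properties already collected for transitive regular periodic decompositions.
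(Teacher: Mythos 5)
Your proof is correct and follows essentially the same route as the paper: the authors likewise observe that for any regular periodic decomposition the set $\bigcup_{i\neq j} D_i\cap D_j$ is closed, invariant and nowhere dense, hence cannot contain a point with dense orbit, and then conclude via Theorem~\ref{thm:decomp_partial}. You merely spell out the forward-invariance and closure details that the paper leaves implicit.
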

\begin{proof}
For any decomposition $\set{D_i}$ the set
$D=\bigcup_{i\neq j} D_i \cap D_j$
is invariant and nowhere dense, in particular  $q\not\in D$. By Theorem~\ref{thm:decomp_partial} there is a terminal decomposition.
\end{proof}

We also present an interesting property of totally transitive systems which was first proved by Banks in \cite{Ban97}.
\begin{lem}\label{totweak}
If $(X,f)$ is a totally
transitive dynamical system with dense set of periodic points, then $(X,f)$ is weakly mixing.
\end{lem}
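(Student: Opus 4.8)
The plan is to reduce weak mixing to a return-time condition that a single periodic point can control, and then to read that condition off directly from the two hypotheses. I would use the standard characterization of weak mixing (see the survey \cite{KolSn}, or \cite{Ban97}; it goes back to Furstenberg): a dynamical system $(X,f)$ is weakly mixing if and only if for every pair of nonempty open sets $U,V\subseteq X$ there is some $n>0$ with
\[
f^n(U)\cap U\neq\emptyset\quad\text{and}\quad f^n(U)\cap V\neq\emptyset .
\]
The advantage of this formulation is that one of the two sets to be met is $U$ itself, and that is precisely the hit that a periodic point sitting inside $U$ will guarantee.

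To verify the condition, fix nonempty open sets $U,V$. Since $\Per(f)$ is dense, I choose a periodic point $p\in U$ and let $m>0$ be its period, so $f^m(p)=p$. Because $f$ is totally transitive, the iterate $g:=f^m$ is transitive, so there is $k\ge 1$ with $g^k(U)\cap V\neq\emptyset$; putting $n:=km$ this says $f^n(U)\cap V\neq\emptyset$. For the other requirement I just note $f^n(p)=g^k(p)=p$, whence $p\in f^n(U)\cap U$ and so $f^n(U)\cap U\neq\emptyset$. Thus the single time $n=km$ witnesses both conditions, and the characterization gives that $f$ is weakly mixing. Observe that each hypothesis is used exactly once: density of periodic points only to place $p$ in $U$, and total transitivity only to make $f^m$ transitive for the particular period $m$ that arose.

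The computation above is routine; the real weight of the proof sits in the characterization, whose nontrivial implication --- that the displayed ``diagonal'' return condition forces $f\times f$ to be transitive --- is the Furstenberg-type bootstrapping argument, and this is the step I would expect to be the main obstacle. I would either cite it from \cite{KolSn} or reconstruct it by iterating the condition to chain returns to $U$ together with transfers toward prescribed targets, thereby upgrading the one-coordinate information into independence of two coordinates. It is worth checking along the way that $n$ can be taken strictly positive (it can, since $n=km\ge m\ge 1$) and that the orientation of the iterates is the one needed; beyond that no metric or dendrite-specific input enters, as the statement is purely one about topological dynamics.
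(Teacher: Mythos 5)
Your argument is correct, and it is essentially the standard proof: the paper states this lemma without proof, attributing it to Banks \cite{Ban97}, and Banks' argument is exactly yours --- place a periodic point $p$ of period $m$ in $U$, use transitivity of $f^m$ to get $n=km$ with $f^n(U)\cap V\neq\emptyset$ while $p\in f^n(U)\cap U$, and invoke the Furstenberg-type characterization of weak mixing via $N(U,U)\cap N(U,V)\neq\emptyset$. You are right that the only nontrivial ingredient is that characterization, which is correctly quoted and available in \cite{KolSn}.
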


Now, we have all the tools to prove the following fact.

\begin{thm}\label{tt:mix}
Let $f\colon D \to D$ be a totally transitive dendrite map, and suppose that $Per(f)\setminus \End(D)$
is dense. Then $f$ is mixing.
\end{thm}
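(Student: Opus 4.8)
The plan is to upgrade weak mixing to mixing by manufacturing a self-covering \emph{open} set around a non-endpoint periodic point and then collapsing the resulting period via total transitivity.

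First I would reduce to a fixed point. The hypothesis gives that $\Per(f)$ is dense, so by Lemma~\ref{totweak} the map $f$ is weakly mixing. Choose $p\in\Per(f)\setminus\End(D)$ of period $k$ and set $g=f^{k}$. Since $f$ is totally transitive so is $g$, and $\Per(g)\supseteq\Per(f)$ shows that $\Per(g)\setminus\End(D)$ is again dense; thus $g$ satisfies the same hypotheses, is weakly mixing by Lemma~\ref{totweak}, and fixes the non-endpoint $p$. As a transitive map on a non-degenerate continuum is surjective, mixing of $g=f^{k}$ implies mixing of $f$: given open $U,V$ write $n=km+r$ and apply mixing of $g$ to the open sets $U$ and $f^{-r}(V)$. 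So it suffices to prove that $g$ is mixing.

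The core object I would try to build is a \emph{nonempty open} set $W$ with $p\in W$ and an integer $n_{0}\ge1$ such that $W\subseteq g^{n_{0}}(W)$. The openness is essential, and this is the point where one must resist working only with arcs: an arc can have a nowhere dense orbit closure (it may map into a slightly larger arc forever, staying inside a nowhere dense invariant subdendrite), so self-covering of an arc need not spread. By contrast, once $W$ is open and $W\subseteq g^{n_{0}}(W)$, the inclusion $g^{r+jn_{0}}(W)\subseteq g^{r+(j+1)n_{0}}(W)$ holds for every residue $r$ and every $j$, since applying $g^{r+jn_{0}}$ to $W\subseteq g^{n_{0}}(W)$ gives it. Writing $h=g^{n_{0}}$, which is transitive because $g$ is totally transitive, and using that $g^{-r}(V)$ is open and nonempty by surjectivity, transitivity of $h$ yields for each $r$ some $j$ with $h^{j}(W)\cap g^{-r}(V)\neq\emptyset$, equivalently $g^{r+jn_{0}}(W)\cap V\neq\emptyset$; by the monotonicity just noted this persists for all larger $j$ in that class. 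Taking the maximum over the finitely many residues $r=0,\dots,n_{0}-1$ shows that for every nonempty open $V$ the set $\{m:g^{m}(W)\cap V\neq\emptyset\}$ is cofinite. To finish, if for an arbitrary nonempty open $U$ one can find $t$ with $W\subseteq g^{t}(U)$, then $g^{t+m}(U)\supseteq g^{m}(W)$, so $\{n:g^{n}(U)\cap V\neq\emptyset\}$ contains a cofinite set and $g$ is mixing.

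The main obstacle is precisely the two geometric steps just used: constructing the open self-covering $W$ around $p$, and routing an arbitrary open set onto it via $W\subseteq g^{t}(U)$. Both are routine when $D$ has a free arc through $p$, since then a connected image meeting the two sides of an arc $[a,b]$ with $a,b\notin\End(D)$ and $p\in(a,b)$ must contain $[a,b]$ (the arc joining the two outer components passes through $a,p,b$), which is exactly the arc-trapping principle underlying the proof of Theorem~\ref{main} and Lemma~\ref{lem:returnper}. The difficulty, and the reason the non-endpoint hypothesis is indispensable, lies in the dense-endpoint regime, where $D$ has no free arc, the order of $p$ may be infinite, and $W$ is an honestly ``fat'' neighborhood that cannot be covered by connectedness alone. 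My expectation is that the non-endpoint hypothesis supplies a genuinely multi-directional covering at $p$: using weak mixing ($g\times g$ transitive) to route small open sets chosen in two distinct components of $D\setminus\{p\}$ back across $p$, together with the density of non-endpoint periodic points to pin the covering at a branch point, should produce both the open self-covering $W$ and the covering $W\subseteq g^{t}(U)$. Turning this heuristic into an open self-covering set, rather than merely a self-covering arc, is where the real work resides.
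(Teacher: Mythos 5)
Your reduction to a fixed point of $g=f^{k}$ and the bookkeeping with residues modulo $n_{0}$ are fine, but the proof has a genuine gap exactly where you admit the ``real work resides'': neither of the two geometric steps --- producing an open $W\ni p$ with $W\subseteq g^{n_{0}}(W)$, and covering $W\subseteq g^{t}(U)$ for an arbitrary nonempty open $U$ --- is carried out, and the second is in fact stronger than what is needed or available (it is an exactness-type property, which does not follow from weak mixing; establishing it would be at least as hard as the theorem itself). As written this is a plan plus a heuristic, not a proof.

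The idea you are missing is that one never needs an open self-covering set; it suffices to track the single point $p$. Shrink $U$ to a connected open set. Pick a periodic point $p\in V\setminus\End(D)$ of period $n$ and two nonempty open sets $V_{1},V_{2}$ lying in two \emph{distinct} connected components of $D\setminus\set{p}$ (possible precisely because $p$ is not an endpoint). By Lemma~\ref{totweak} $f$ is weakly mixing, hence by Furstenberg's theorem the $(2n+2)$-fold product of $f$ is transitive, which yields a single $N$ with $f^{N+i}(U)\cap V_{1}\neq\emptyset$ and $f^{N+i}(U)\cap V_{2}\neq\emptyset$ for all $i=0,\dots,n$ simultaneously. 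Each $f^{N+i}(U)$ is connected and meets two different components of $D\setminus\set{p}$, so it must contain $p$ --- this is the separation principle you correctly identify as underlying Theorem~\ref{main} and Lemma~\ref{lem:returnper}, but applied to the connected image of $U$ rather than to an arc or to a ``fat'' neighborhood, which sidesteps the dense-endpoint difficulty entirely. Finally, any $k\geq N$ can be written as $k=N+i+jn$ with $0\leq i<n$ and $j\geq 0$, so $p=f^{jn}(p)\in f^{jn}\bigl(f^{N+i}(U)\bigr)=f^{k}(U)$, and since $p\in V$ this gives $f^{k}(U)\cap V\neq\emptyset$ for all $k\geq N$. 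No fixed-point reduction, no open self-covering set, and no covering of $W$ by images of $U$ is required.
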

\begin{proof}
Take any two nonempty open sets $U,V$. Taking a smaller set if necessary, we may assume that $U$ is connected.
There is a periodic point $p\in V\setminus \End(D)$.
Denote by $n$ the period of $p$. Since $p$ is not an endpoint, there are two open sets $V_1, V_2$
contained in two different connected components of $D\setminus \set{p}$.
By Lemma \ref{totweak}, $f$ is weakly mixing, then the $(2n+2)$-times Cartesian product of copies of $f$ is also transitive (see \cite{FU}). Hence there is $N>0$ such that
$$
(\Pi_{1\leq i\leq 2n+2}f^N)(\Pi_{1\leq i\leq 2n+2}U)\cap (\Pi_{0\leq i\leq n}f^{-i}(V_1)\times \Pi_{0\leq i\leq n}f^{-i}(V_2))\neq\emptyset.
$$
So
$f^{N+i}(U)\cap V_1\neq \emptyset$ and $f^{N+i}(U)\cap V_2\neq \emptyset$ for $i=0,1,\ldots,n$.
But since $U$ is connected we obtain that $p\in f^{N+i}(U)$ for $i=0,\ldots,n$.
Now, if we fix any $k\geq N$ then $k=N+i+jn$ for some $j\geq 0$ and $0\leq i<n$.
But then
$$
p=f^{jn}(p)\in f^{jn}(f^{N+i}(U))=f^k(U)
$$
which gives $f^k(U)\cap V\neq \emptyset$ completing the proof.
\end{proof}

\section{Endpoints and transitivity}\label{sec:endpointstransitivity}

In this section we will present a construction of a dendrite and transitive map on it, where periodic points are dense and all
are contained in endpoints of the dendrite except only one.

Let $S$ be a star with center $b_{\emptyset}$ and beams $B_n$, for $n\in\mathbb{N}$.
In the interior of each beam $B_n$, we locate a dense sequence of
points $(b_{(n,i)})_{i>0}$.\\
In every point $b_{(n,i)}$, we attach a star $S_{(n,i)}$ with center
$b_{(n,i)}$ such that $S_{(n,i)}\cap S=b_{(n,i)}$ and $diam(S_{(n,i)})<2^{-(n+i)}$. Let us call the beams of the star $S_{(n,i)}$  by  $B^{(n,i)}_m$ for $m\in\mathbb{N}$.  \\
In the interior of every beam $B^{(n,i)}_m$, we locate similarly as
above a dense sequence of points $(b_{(n,i)\times(m,j)})_{j>0}$.
\medskip

We follow this process by induction; For any $k\in\mathbb{N}$, and
for any $\alpha\in (\mathbb{N}^2)^{k}$ we attach a star $S_{\alpha}$ in
the point $b_{\alpha}$ such that $diam(S_{\alpha})<2^{-(\sum_{i=1}^kn_i+j_i)}$ where $\alpha=(n_1,j_1)\dots (n_k,j_k)$. We note the beams of $S_{\alpha}$ by  $B^{\alpha}_{m}$,
$m\in\mathbb{N}$. In the interior of every beam $B^{\alpha}_{m}$, we
locate similarly a dense sequence of points $(b_{\alpha\times
(m,j)})_{j>0}$. In each point $b_{\alpha\times (m,j)}$, we attach a
star $S_{\alpha\times (m,j)}$ such that $S_{\alpha\times (m,j)}\cap
S_{\alpha}=b_{\alpha\times (m,j)}$.\\

Then, we let $X:=\overline{S\cup \bigcup_{k>0}( \cup_{\alpha\in
(\mathbb{N}^2)^{k}}S_{\alpha})}$. The set of branch points of $D$ is
$\{b_{\emptyset}\}\cup \cup_{k\in\mathbb{N}}\{b_{\alpha}$, $\alpha\in
(\mathbb{N}^2)^{k}\}$. It is clear from the construction of $X$ that the set of branch point is dense and so is the set of endpoints.

For $k\in\mathbb{N}\cup\{+\infty\}$ and $i\in\{1,2,\dots,k\}$, if $\alpha\in (\mathbb{N}^2)^k$
we denote by $\alpha_i$ the pair in $\mathbb{N}^2$ that occur in the position $i$ in the sequence
 $\alpha$. Hence, we can write $\alpha=\alpha_1\alpha_2\dots\alpha_k$ if $k\neq +\infty$. For $n,k\in\mathbb{N}$ and $\alpha\in (\mathbb{N}^2)^k$, the concatenation of $\alpha$ for $n$ times, denoted by $\alpha^n$, is the sequence
 $\beta\in(\mathbb{N}^2)^{nk}$ given in the following way:  $\beta_{i}=\alpha_{i}$ for each $i\in\{1,\dots,k\}$ and $\beta_{j}=\beta_{j-k}$ for each $j\in\{k+1,\dots,nk\}$.
For $k\in\mathbb{N}$ and $k^{'}\in\mathbb{N}\cup\{+\infty\}$ such that $k<k^{'}$,
 if $\alpha\in (\mathbb{N}^2)^k$ and $\alpha^{'}\in (\mathbb{N}^2)^{k^{'}}$, we write
 $\alpha\subset \alpha^{'}$ if $\alpha_i=\alpha^{'}_i$ for $1\leq i\leq k$.
Let us denote by $\Lambda=\{\emptyset\}\cup\bigcup_{k\in\mathbb{N}}(\mathbb{N}^2)^k$.

We define the function $\sigma:\Lambda\to\Lambda$ which will be used later to describe the dynamics of the map $F$ on the set of branch points: For an $\alpha\in\Lambda$, let us denote by $\alpha_i=(n_i,j_i)$ for any $1\leq i\leq k$. Then we define $\sigma(\alpha)=\beta$ in the following way:
\begin{enumerate}[(i)]
\item If $n_1=1$ and $k>1$ then $\beta_1=(j_1+n_2-1,j_2)$ and $\beta_i=\alpha_{i+1}$ for all $1<i\leq k$.
\item If $n_1=1$ and $k=1$ then $\beta=\emptyset$.
\item If $n_1>1$ then $\beta_1=(n_1-1,j_1)$ and $\beta_i=\alpha_{i}$ for all $1<i\leq k$.
\end{enumerate}

For $k\in\mathbb{N}$ and $\alpha\in (\mathbb{N}^2)^{k}$, we denote by $X_{\alpha}$
the union of the closure of connected components of $X\setminus\{b_{\alpha}\}$ that
contains all the beams $B^{\alpha}_{m}$, for $m\in\mathbb{N}$.
Then for any non-empty open subset $U$ of $X$, there is $\alpha\in(\mathbb{N}^2)^{k}$
for  some $k\in\mathbb{N}$ such that $X_{\alpha}\subset U$.

For any $n\in\mathbb{N}$, let us denote by $D_n$, the closure of the
connected components of $D\setminus \{b_{\emptyset}\}$ that contains the beam $B_n$.

Now we are ready to define a map $F\colon X\to X$.
 We let $F_{\mid D_{n+1}}:D_{n+1}\rightarrow D_n$ be a homeomorphism which
mapping linearly each arc
$$[b_{\emptyset},b_{(n+1,j_1)(n_2,j_2)\dots(n_k,j_k)}]$$ to the arc
$$[b_{\emptyset},b_{(n,j_1)(n_2,j_2)\dots(n_k,j_k)}].$$

\begin{figure}[!htbp]
\begin{center}
\includegraphics[width=0.8\textwidth]{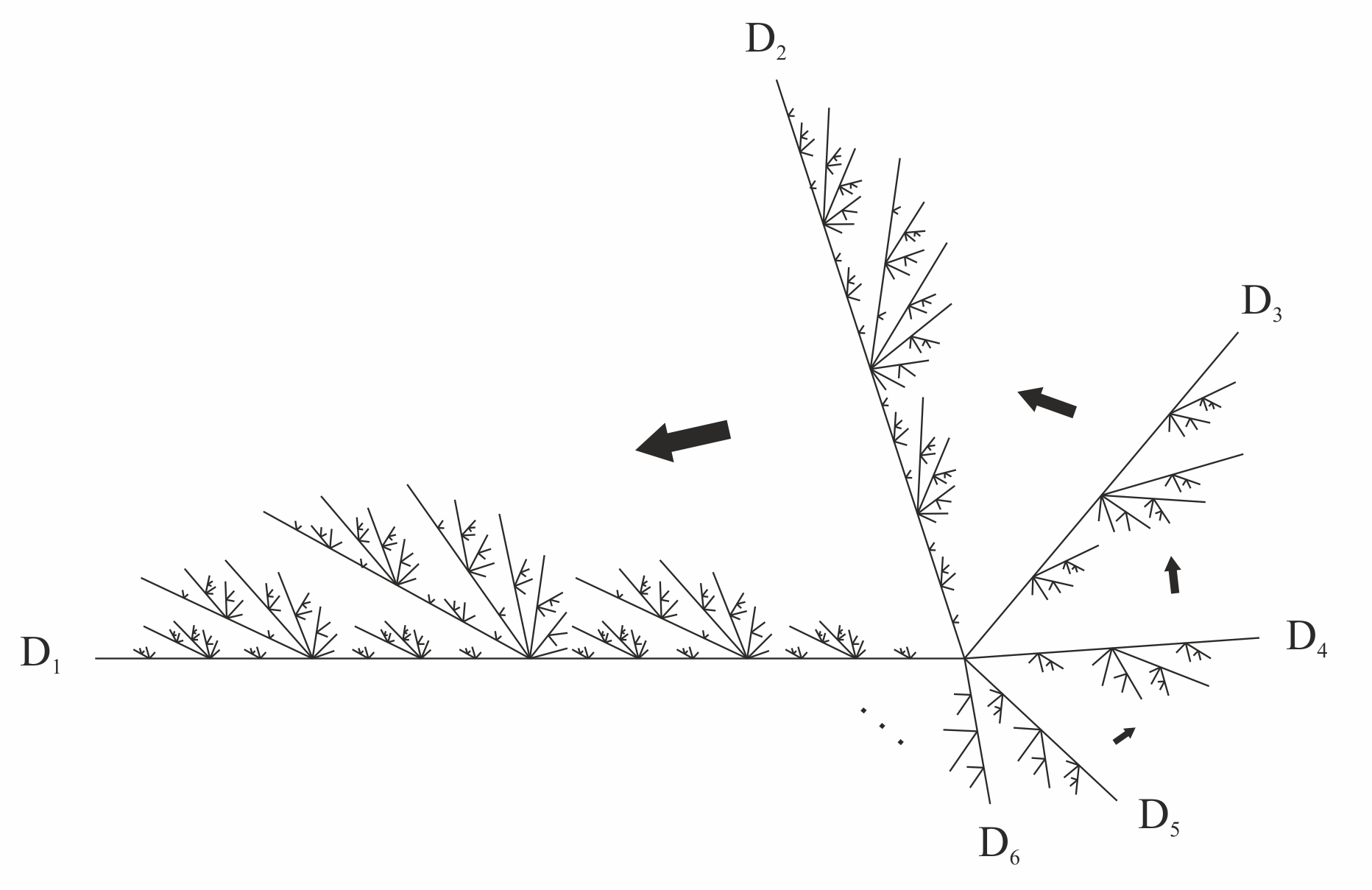} \caption{Dendrite $D$ with a sketch of action of $F_{\mid D_{n+1}}:D_{n+1}\rightarrow D_n$}
\end{center}
\end{figure}

\begin{figure}[!htbp]
\begin{center}
\includegraphics[width=0.8\textwidth]{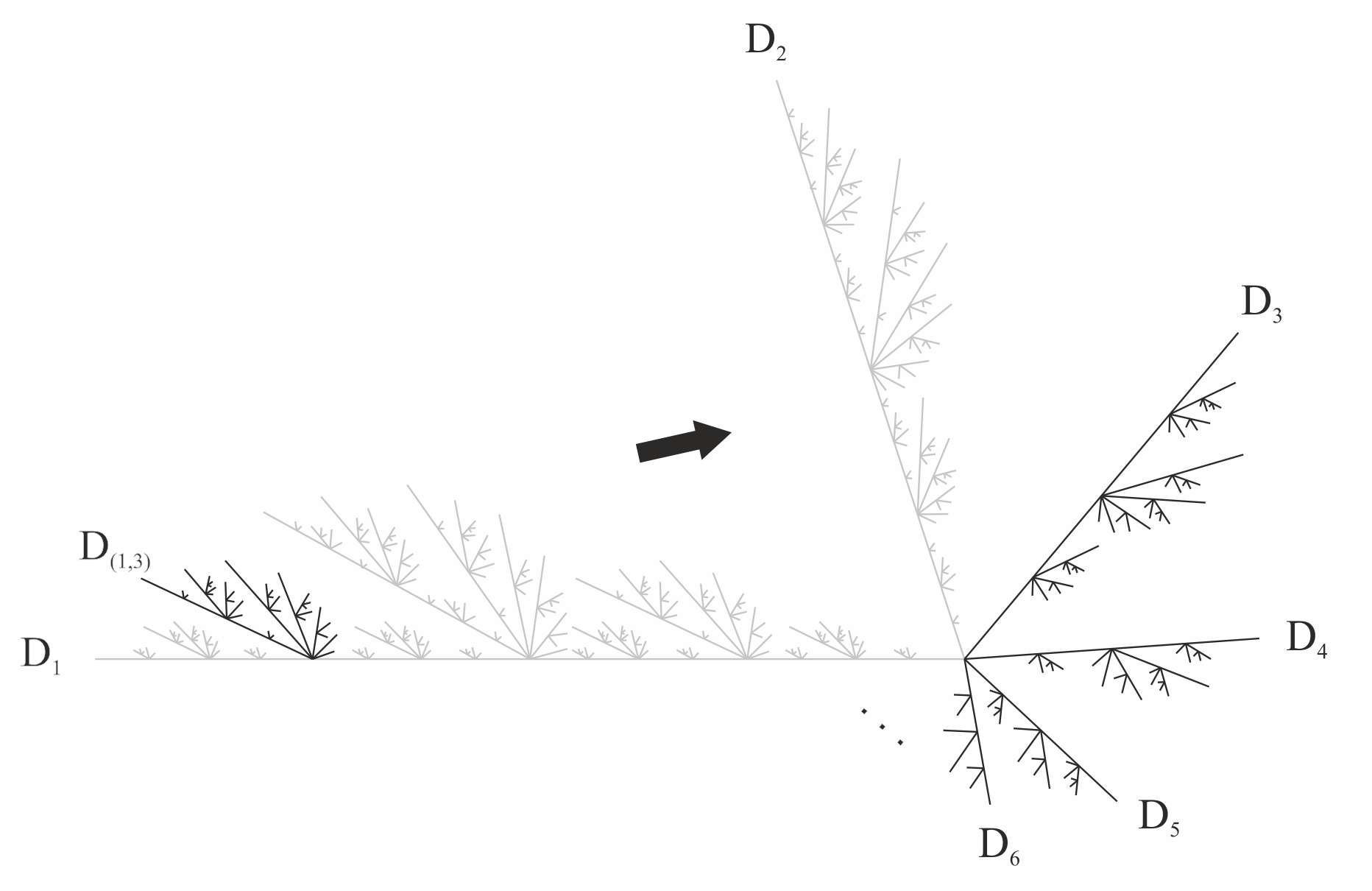} \caption{Dendrite $D$ with a sketch of action of $F_{\mid D_{(1,3)}}:D_{(1,3)}\rightarrow \bigcup_{i\geq 3}D_i$}
\end{center}
\end{figure}

% We will define the map $F$ continuously on $S\cup \bigcup_{k>0}( \cup_{\alpha\in
%\mathbb{N^2}^{k}}S_{\alpha})$.
%
%\medspace

Now we define the map $F$ on the dendrite
$D_1$:
First, we let $F(b_{\emptyset})=b_{\emptyset}$.
\medskip
Then we let $F_{\mid D_{(1,n)}}:
D_{(1,n)}\rightarrow \cup_{i\geq n} D_i$ be a homeomorphism which
mapping linearly any arc
$$[b_{(1,n)},b_{(1,n)(n_1,j_1)...(n_k,j_k)}]$$ to
the arc
$$[b_{\emptyset},b_{(n+n_1-1,j_1)...(n_k,j_k)}],$$
%Then for each $k\in\mathbb{N}$ and
%$(n_1,j_1)\times(n_2,j_2)\times...\times(n_k,j_k)\in(\mathbb{N^2})^{k}$,
%we let
%$$F(b_{(n_1+1,j_1)\times(n_2,j_2)\times...\times(n_k,j_k)})=b_{(n_1,j_1)\times(n_2,j_2)\times...\times(n_k,j_k)}.$$
%
%In this way, for every $n\in\mathbb{N}$,
%
%
%\medskip

where  $n,k\in\mathbb{N}$ and
$(n_1,j_1)(n_2,j_2)...(n_k,j_k)\in(\mathbb{N}^2)^{k}$ (see Figure. 2).

 In this way, the beam $B_1$ is collapsed to the fixed point $b_{\emptyset}$ and
$F(D_{(1,n)})=\cup_{i\geq n} D_i$.

Hence by using the map $\sigma$, we have for any  $\alpha\in\Lambda $,
\begin{equation}
\label{eq:i*}
F(b_{\alpha})=b_{\sigma(\alpha)}.
\end{equation}

Notice that if $1<k\in\mathbb{N}$,
$(n_1,j_1)(n_2,j_2)...(n_k,j_k)\in\mathbb{N}^{2k}$ and
$m=(\sum_{i=1}^{k} n_i+j_i)-k$, then
\begin{equation}\label{eq1}
F^{m}([b_{\emptyset},b_{(n_1,j_1)(n_2,j_2)...(n_k,j_k)}])=b_{\emptyset}.
\end{equation}

\begin{thm}
Let $F\colon X \ra X$ be the continuous map constructed in this section.
The following conditions are satisfied.
\begin{enumerate}[(i)]
\item \label{c:exact} the map $F$ is exact,
\item \label{c:periodic}The set of periodic points of $F$ is dense in $X$ and it is included in the set of endpoints of $X$ except the point $b_{\emptyset}$.
\end{enumerate}
\end{thm}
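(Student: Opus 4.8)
The plan is to establish the two claimed properties separately, with the dynamical bookkeeping on the set of branch points (encoded by the shift $\sigma$ on $\Lambda$) as the central engine.

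For part (ii), the dense set of periodic points, my first step would be to analyze the fixed/periodic structure of $\sigma$ on $\Lambda$. I would observe directly from the definition of $\sigma$ that $b_\emptyset$ is the only fixed branch point: applying $\sigma$ strictly decreases the ``weight'' $\sum(n_i+j_i)$ whenever we are in case (iii) or case (i) with $j_1+n_2-1 < n_1+n_2$, and case (ii) sends everything to $\emptyset$; combined with formula \eqref{eq1}, every branch point $b_\alpha$ is eventually mapped to $b_\emptyset$, so no branch point other than $b_\emptyset$ can be periodic. Thus I would conclude that periodic points (other than the fixed point $b_\emptyset$) must be endpoints. To show periodic points are \emph{dense}, I would use the structural fact already recorded in the excerpt: for any nonempty open $U\subset X$ there is some $\alpha\in(\mathbb{N}^2)^k$ with $X_\alpha\subset U$. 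Since $F(D_{(1,n)})=\bigcup_{i\ge n}D_i$ and each $F|_{D_{n+1}}\colon D_{n+1}\to D_n$ is a homeomorphism mapping arcs from $b_\emptyset$ linearly, I would track how $F$ acts on the sub-dendrite $X_\alpha$ and show, using \eqref{eq:i*}, that some iterate $F^m$ maps a beam inside $X_\alpha$ onto a set containing $b_\emptyset$ and then expands back, producing an invariant arc on which $F^{m}$ (or a further iterate) has a fixed endpoint. The endpoint realized as a fixed point of that iterate lies in $U$, giving a periodic point in every open set.

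For part (i), exactness, I would take an arbitrary nonempty open set $U$, again invoke the structural fact to find $X_\alpha\subset U$, and then chase the forward images. The key identity is $F(D_{(1,n)})=\bigcup_{i\ge n}D_i$: the first beam $B_1$ collapses to $b_\emptyset$ while the maps shift indices downward, so after enough iterates the image of a single $X_\alpha$ must ``spread'' across all the $D_n$. Concretely, I would show that $F^N(X_\alpha)\supset D_1$ for some $N$ (by driving $\alpha$ down to a point whose subtree covers the top star via \eqref{eq1} and then using that $F$ maps $D_{(1,n)}$ onto the tail $\bigcup_{i\ge n}D_i$), and that once the image contains $D_1$ one further application of the index-lowering homeomorphisms and the collapsing of $B_1$ forces $F^{N+1}(X_\alpha)=X$. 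Exactness then follows since $U\supset X_\alpha$.

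\textbf{Main obstacle.} The hard part will be the exactness argument: I must show that the forward image of one small sub-dendrite $X_\alpha$ genuinely exhausts all of $X$ in finitely many steps, not merely becomes dense. This requires carefully verifying that the ``spreading'' mechanism $F(D_{(1,n)})=\bigcup_{i\ge n}D_i$ surjects onto the full countable fan of beams after the collapse of $B_1$, and that the diameter conditions $\diam(S_\alpha)<2^{-(\cdots)}$ do not prevent finite-time surjectivity onto the closure $X$. Controlling the interaction between the self-similar attachment of stars and the index-shifting dynamics — in particular making the bookkeeping via $\sigma$ precise enough to guarantee $F^N(X_\alpha)=X$ rather than just density — is where the real work lies; everything else reduces to the combinatorics of $\sigma$ on $\Lambda$ and the linear-on-arcs definition of $F$.
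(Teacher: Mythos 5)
Your plans for exactness and for the statement that no cut point other than $b_{\emptyset}$ is periodic are essentially the paper's: locate some $X_{\alpha}\subset U$, compute $m=(\sum_{i=1}^{k}n_i+j_i)-k$ and check $F^m(X_{\alpha})=X$ via \eqref{eq:i*}, and use \eqref{eq1} to see that everything on an arc $[b_{\emptyset},b_{\beta}]$ is eventually sent to $b_{\emptyset}$. (One small repair: ``no branch point is periodic'' does not yet give ``all periodic points except $b_{\emptyset}$ are endpoints'' --- you must also exclude the order-two cut points in the interiors of the beams; \eqref{eq1} does this because it collapses whole arcs, not just the branch points on them.)

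The genuine gap is in the density of periodic points. Your mechanism --- an iterate that ``expands back, producing an invariant arc on which $F^m$ has a fixed endpoint'' --- is an interval-covering/intermediate-value style argument, and it cannot work here, for a reason you have already supplied yourself: by \eqref{eq1} no cut point of $X$ other than $b_{\emptyset}$ is periodic, so any fixed point of an iterate of $F$ located on an arc at a cut point can only be $b_{\emptyset}$, which need not lie in $U$. The periodic point you must produce inside $X_{\alpha}$ is necessarily an endpoint of $X$, sitting at ``infinite depth'' in the self-similar structure, and no finite covering argument reaches it. The paper's key idea, absent from your sketch, is to iterate the symbolic structure itself: for the concatenation powers $\alpha^{n}$ one has $X_{\alpha^{n+1}}\subset X_{\alpha^{n}}$ and $\diam(X_{\alpha^{n}})\to 0$, so $\bigcap_{n\geq 1}X_{\alpha^{n}}$ is a single point $z=\lim_{n\to\infty}b_{\alpha^{n}}$, which one checks is an endpoint of $X$; since $\sigma^{m}(\alpha^{n+1})=\alpha^{n}$, equation \eqref{eq:i*} gives $F^{m}(b_{\alpha^{n+1}})=b_{\alpha^{n}}$, and uniform continuity of $F^{m}$ then forces $F^{m}(z)=z$. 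Without this nested-intersection construction (or an equivalent limiting argument) the density claim remains unproved.
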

\begin{proof}
In order to prove \eqref{c:exact} let us fix
any nonempty open set $U\subset X$. Then there exist $k\in\mathbb{N}$ and $\alpha\in(\mathbb{N}^2)^{k} $ such that $X_{\alpha}\subset U$. Let $m=(\sum_{i=1}^{k}n_i+j_i)-k$ where $(n_i,j_i)=\alpha_i$ for $i=1,\ldots,k$. By the construction of $F$ (e.g. see \eqref{eq:i*}) we easily obtain that $F^m(X_{\alpha})=X$.
This completes the proof of \eqref{c:exact}.

To prove the density of the set of periodic points it is sufficient to prove the existence of a periodic point in each $X_{\alpha}$ for any $\alpha\in(\mathbb{N}^2)^{k}$ and for any $k\in\mathbb{N}$. For any $n\in\mathbb{N}$, we have $X_{\alpha^{n+1}}\subset X_{\alpha^{n}}$.

Note that $\lim_{n\to +\infty} \diam(X_{\alpha^{n}})=0$ and hence $\bigcap_{n\geq 1}X_{\alpha^{n}}$ is a single point, say $z\in \bigcap_{n\geq 1}X_{\alpha^{n}}$. We claim that $z$ is an endpoint and $z=\lim_{n\to +\infty}b_{\alpha^{n}}$. Indeed, we have for any $n\geq 1$, $[b_{\alpha^{n}},b_{\alpha^{n+1}})\cap X_{\alpha^{n+1}}=\emptyset$ and we have also $b_{\alpha^{n+2}}\in X_{\alpha^{n+1}}$, hence $[b_{\alpha^{n}},b_{\alpha^{n+1}}]\cup [b_{\alpha^{n+1}},b_{\alpha^{n+2}}]$ is an arc. So by induction, for any $n\geq 1$, $[b_{\alpha},b_{\alpha^{n}}]$ is an arc, which implies that the sequence $(b_{\alpha^{n}})_{n\in\mathbb{N}}$ converges and hence its limit must be $z$.
If $z\not\in \End(X)$ then there is an endpoint $e\in X$ such that $z\in [b_{\alpha},e]$ and so for each $n\in\mathbb{N}$, $e\in X_{\alpha^{n}}$ since $z$ and $e$ are in the same connected component of $X\setminus\{b_{\alpha^n}\}$. This shows that $e,z\in \bigcap_{n\geq 1}X_{\alpha^{n}}$ which is a contradiction, since $z\neq e$. We conclude that $z$ is an endpoint of $X$.

Now, we are going to prove that $z$ is a fixed point of $F^m$ where $m=(\sum_{i=1}^k n_i+j_i)-k$. Fix any $\eps>0$ and let $0<\delta<\eps/2$ be such that
$d(F^m(p),F^m(q))<\eps/2$ provided that $d(p,q)<\delta$.
There is $N>0$ such that for each $n>N$, we have $d(z,b_{\alpha^{n}})<\delta$ and so for every $n>N$ we have
$$
d(F^m(z),z)\leq d(z, b_{\alpha^n})+d(F^m(z),F^{m}(b_{\alpha^{n+1}}))<\eps.
$$
Since $\eps$ was arbitrary, $z$ is a fixed point of $F^m$.
By \eqref{eq1} all cut points except $b_{\emptyset}$ are not periodic (but eventually fixed, since they are eventually send by $F$
to $b_{\emptyset}$).
\end{proof}

In next theorem we will analyze the construction presented by Hoehn and Mouron in \cite[Example~17]{MoHo}.
Recall, that a pair $(x,y)$ is \emph{proximal} in a dynamical system $(X,f)$ if $\liminf_{n\to\infty} d(f^n(x),f^n(y))=0$
and $(X,f)$ is \emph{proximal} if any pair of points in $X$ is a proximal pair.
It can be proved (see \cite{AK}) that a dynamical system $(X,f)$ is proximal if it has a fixed point which is the unique minimal subset of $X$. We will show that \cite[Example~17]{MoHo} gives an example of transitive dendrite map with a unique periodic point hence without dense set of periodic points.

\begin{thm}\label{mouronexample}
There is a dendrite $D$ and a map $F\colon D \to D$ such that:
\begin{enumerate}[(i)]
\item \label{c:exact} the map $F$ is weakly mixing but not mixing,
\item \label{c:periodic} the map $F$ is proximal (in particular it has the unique periodic point which is the fixed point).
\end{enumerate}
\end{thm}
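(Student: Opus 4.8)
The plan is to analyze the construction from Hoehn–Mouron \cite[Example~17]{MoHo} and verify that it produces a dendrite map with the two claimed properties. Since the authors explicitly cite an existing example, my proof would proceed by first recalling (or reconstructing) the essential features of that construction — presumably an inverse-limit or direct explicit description of a dendrite $D$ together with the map $F$ — and then verifying weak mixing, failure of mixing, and proximality in turn. The key structural input I expect from \cite{MoHo} is that $F$ is already known to be weakly mixing but not mixing; so for part \eqref{c:exact} I would simply invoke the properties established there, possibly re-deriving weak mixing by exhibiting for each quadruple of nonempty open sets $U_1,U_2,V_1,V_2$ a single $n$ with $f^n(U_i)\cap V_i\neq\emptyset$ for $i=1,2$, and non-mixing by producing two open sets $U,V$ and an infinite set of times $n$ at which $f^n(U)\cap V=\emptyset$.

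For part \eqref{c:periodic}, the heart of the argument is proximality, and here I would lean on the criterion stated just before the theorem: \emph{a dynamical system $(X,f)$ is proximal if it has a fixed point which is the unique minimal subset of $X$} (from \cite{AK}). Thus the plan reduces to two sub-tasks: (a) locate a fixed point $p\in D$ (guaranteed to exist since every dendrite has the fixed point property, as used already in the proof of Theorem~\ref{thm:decomp_partial}); and (b) show that $\{p\}$ is the \emph{only} minimal subset of $D$. Granting these, proximality follows immediately, and since any periodic orbit supports a minimal set, the uniqueness of the minimal set forces every periodic point to lie in $\{p\}$ — giving that $p$ is the unique periodic point, as claimed in the parenthetical remark.

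**The main obstacle** will be establishing that $\{p\}$ is the unique minimal set, i.e. that no other point has a minimal (equivalently, uniformly recurrent) orbit closure avoiding $p$. The natural strategy is to show that the orbit of every point is attracted toward $p$ in the sense that $p\in\overline{\orbp(x)}$ for all $x$, which under weak mixing and the self-similar dendrite structure should follow from a collapsing mechanism: much as in the previous construction (cf.\ equation~\eqref{eq1}, where iterates of $F$ send whole arcs onto the fixed point $b_{\emptyset}$), one expects that $F$ progressively pushes points toward $p$ so that $p$ lies in every $\omega$-limit set. If every $\omega$-limit set contains $p$, then any minimal set $M$ satisfies $p\in M$ (since $M=\omega_f(x)$ for $x\in M$), and minimality together with invariance of $\{p\}$ forces $M=\{p\}$. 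Verifying this attraction property rigorously from the explicit description in \cite{MoHo} — rather than merely quoting that $F$ is weakly mixing — is the delicate step, since weak mixing alone does not preclude extra minimal sets. I would therefore expect the bulk of the work to be a careful combinatorial or metric analysis of how $F$ acts on the branch-point structure of $D$, showing that orbits cannot remain bounded away from $p$.
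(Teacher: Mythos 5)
Your overall strategy coincides with the paper's: part (i) is quoted directly from \cite[Example~17]{MoHo}, and part (ii) is reduced, via the criterion from \cite{AK}, to showing that the fixed point is the unique minimal subset of $D$ (from which the uniqueness of the periodic point follows exactly as you say). The problem is that the one step carrying all the mathematical content --- the proof that no minimal set other than the fixed point exists --- is left in your proposal as an expectation (``one expects that $F$ progressively pushes points toward $p$'') rather than an argument. You correctly observe that weak mixing alone does not preclude extra minimal sets, but you do not identify which feature of the Hoehn--Mouron construction rules them out, and the route you suggest (proving $p\in\omega_f(x)$ for \emph{every} $x\in D$) is a stronger statement than needed and is not how the paper closes the gap.

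What the paper actually extracts from \cite[Example~17]{MoHo} is a specific combinatorial structure: subdendrites $E_0,E_1,\ldots$ with $D=\bigcup_j E_j$, $E_i\cap E_j=\{o\}$ for $i\neq j$, $F(o)=o$, $F(E_j)=E_{j-1}$ for $j>0$, and, crucially, $F^n(E_0)\subset\bigcup_{n+j-1\in Z}E_j$ for all $n>0$, where $Z\subset\N$ is chosen so that $\N\setminus Z$ contains arbitrarily long blocks of consecutive integers. Given a minimal set $M$ and $x\in M$ with $x\neq o$, one picks a neighborhood $U$ of $x$ with $U\subset E_s$; minimality gives \emph{syndetic} return times of the orbit of $x$ to $U$ with some gap bound $K$, while a block of length $s+K+2$ in $\N\setminus Z$ forces $F^{k+s+i}(x)\in\bigcup_{j>s}E_j$ (hence outside $U$) for $K+1$ consecutive values of $i$ --- a contradiction. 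Note that the argument uses syndeticity of returns for points of minimal sets, not attraction of all orbits to $o$. Without isolating this quantitative property of the construction (or an equivalent one), your plan does not yield a proof; as written it restates the difficulty rather than resolving it.
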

\begin{proof}
Let $D$ be the Wa\.zewski universal dendrite (see \cite{nadler92}).
Take any infinite set $Z\subset \N$ such that for every $n>0$ there is $j>0$ such that $\set{j,j+1,\ldots,j+n}\subset \N\setminus Z$.
It was proved in \cite[Example~17]{MoHo} that there exists sub-dendrites $\set{E_j}_{j=0}^\infty$, a point $o\in D$ and a map $F\colon D\to D$
such that:
\begin{enumerate}
\item $D=\bigcup_{j=0}^\infty E_j$ and $E_i\cap E_j=\set{o}$ for each $i\neq j$,
\item $F$ is weakly mixing but not mixing,
\item $F(o)=o$ and $f(E_j)=E_{j-1}$ for $j>0$,
\item $f^n(E_0)\subset \bigcup_{n+j-1\in Z}E_j$ for every $n>0$.
\end{enumerate}
Let $M$ be a minimal subset of $D$ and fix any $x\in M$ and suppose that $x\neq o$.
Then there is $s\geq 0$ and an open neighborhood $U$ of $x$ such that $U\subset E_s$.
Since $x$ belongs to a minimal set, there exists $K>0$ such that for every $n>0$ there is
$0\leq j < K$ such that $f^{n+j}(x)\in U$.

But there exists $k\in \N$ such that $\set{k,k+1,\ldots, k+s+K+1}\cap Z =\emptyset$.
Take any integer $0\leq i \leq K$ and observe that
$$
f^{i+k+s}(x)\in f^{k+i}(E_0)\subset \bigcup_{k+i+j-1\in Z}E_j \subset \bigcup_{j>s}E_j
$$
hence $f^{i+k+s}(x)\not\in U$ for $i=0,1,\ldots,K$, which is a contradiction.
This proves that $M=\set{o}$ is the unique minimal set for the dynamical system $(D,F)$.
\end{proof}

\section{Transitivity, periodic points and branch points}\label{sec:mix:constr}

The aim of this section is to show that there is a dendrite $D$ with dense endpoints and a mixing map $f$ on it such that
assumptions of Theorem~\ref{main} are satisfied (or even more, every branch point is a periodic point). Before we start with the construction we need to introduce some
notation together with a few standard facts and techniques.

We recall that a map $f\colon X\to Y$ is \emph{monotone} if $X$ and $Y$ are
metric spaces, $f$ is continuous, and for each point $y\in Y$ its
preimage $f^{-1}(y)$ is connected. If
$X$ is a tree and there is a finite set $P\subset X$ such that for each
connected component $C$ of $X\setminus P$ the map $f|_{\overline{C}}\colon \overline{C}\to Y$
is monotone, then we say that $f$ is $P$-\emph{monotone}.

\begin{defn}
Given a dynamical system $(G,f)$ acting on a topological graph $G$ and free arcs $I,J\subset G$, we
say that $I$ \emph{covers} $J$ through $f$ (or $I$~$f$-covers~$J$,
for short) if there exists an arc $K \subset I$ such
that $f (K) = J$.
\end{defn}

Let $T$ be a tree and consider a dynamical system $(T,f)$. We say that $f$ is \emph{linear} on a
set $S\subset T$ if there is a constant $\alpha$ such that
$d(f(x),f(y))=\alpha d(x,y)$ for all $x,y\in S$ (here, as always $d$
denotes the taxicab metric on $T$). If $P\subset T$ is a finite set such that for each
connected component $C$ of $X\setminus P$ the map $f|_{\overline{C}}\colon \overline{C}\to T$
is linear, then we say that $f$ is $P$-\emph{linear}.

If $f$ is
$P$-monotone map then we call $f$  a \emph{Markov map} if $P\subset T$ is a finite set containing all
endpoints and branch points of $T$ and $f(P)\subset P$. In the above situation we call $f$ a
$P$-\emph{Markov} map for short and by \emph{$P$-basic interval} we mean any arc $J\subset T$ whose endpoints are in $P$ and there is no other point of $P$ in $J$.
If $f$ is a $P$-Markov map, then the
\emph{Markov graph} of $f$ with respect to $P$ ($P$-Markov graph of $f$ for
short) is defined as a directed graph with the set of $P$-basic
intervals as a set of vertices and with the set of edges defined by the
$f$-covering relation, that is, there is an edge from a $P$-basic interval
$I$ to $P$-basic interval $J$ in the Markov graph ($I\to J$) if and only if $J$ is
$f$-covered by $I$.

There are also few results we would like to recall for later reference.
The following fact is \cite[Corollorary 1.11]{Baldwin}.

%======================================================================================%
\begin{lem}\label{lem:Markov-transitive}
Suppose $T$ is a tree and $f\colon T\to T$ is $P$-Markov and $P$-linear with
respect to some $P$ containing all endpoints and branch points of $T$. Then $f$ is transitive if and only
if the $P$-Markov graph of $f$ is strongly connected (i.e. there is a path between any two vertices of the graph) and is not a graph of a cyclic
permutation.
\end{lem}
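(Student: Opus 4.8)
The plan is to translate the dynamics of $f$ into combinatorics on the $P$-Markov graph and then read transitivity off that graph. First I would record the structural facts forced by the hypotheses. Since $f$ is $P$-monotone, its restriction to each $P$-basic interval $I$ is monotone, and since it is $P$-linear, $f|_I$ is a linear homeomorphism of $I$ onto the arc $f(I)$, whose endpoints lie in $P$ because $f(P)\subset P$. Hence $f(I)$ is a concatenation of $P$-basic intervals, each covered exactly once, so that $I$ $f$-covers $J$ precisely when $J\subset f(I)$, and the matrix counting coverings coincides with the $0$--$1$ adjacency matrix $A$ of the $P$-Markov graph. Iterating gives $f^n(I)=\bigcup\{J : \text{there is a directed path of length } n \text{ from } I \text{ to } J\}$, and each such path determines a \emph{full lap}: a subarc $L\subset I$ carried homeomorphically by $f^n$ onto the terminal basic interval $J$. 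These remarks reduce every covering question to a question about directed paths.

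For the forward implication I assume $f$ transitive. Given basic intervals $I,J$, transitivity yields $n$ with $f^n(\Int I)\cap\Int J\neq\emptyset$; since $f^n(I)$ is a union of closed basic intervals, meeting the open set $\Int J$ forces $J\subset f^n(I)$, i.e.\ a directed path from $I$ to $J$, so the graph is strongly connected. To exclude a cyclic permutation, suppose the graph were a single cycle of length $m$: then $f$ carries each basic interval homeomorphically onto the next, hence is a homeomorphism of $T$, and $f^m$ is a surjective monotone self-homeomorphism of each arc $I_v$. Choosing a transitive point in $\Int I_v$ (possible since such points are dense by transitivity), its $f^m$-orbit would be dense in $I_v$, which is impossible for a monotone self-homeomorphism of an arc. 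Thus the graph is not a cyclic permutation.

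For the converse I would show that strong connectivity together with the failure of the cyclic-permutation condition forces metric expansion, which is the crux. Writing $\ell_v$ for the length of $I_v$ and $\lambda_v>0$ for the slope of $f$ there, linearity gives $\lambda_v\ell_v=|f(I_v)|=\sum_w A_{vw}\ell_w$, so $\lambda_v\geq \ell_w/\ell_v$ for every successor $w$ of $v$; multiplying along any directed cycle telescopes to a slope-product $\geq 1$, with equality only if every vertex on the cycle has out-degree one. A cycle of out-degree-one vertices is a trap inside a strongly connected graph, which forces the whole graph to be that cycle, i.e.\ a cyclic permutation; ruling this out, every cycle has slope-product strictly above $1$. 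As there are finitely many simple cycles, their minimal slope-product is some $c>1$, and decomposing a long walk into simple cycles plus a bounded remainder shows the slope-product along any path of length $n$ grows at least like $c^{n/\#P}$. Since the full lap of such a path has length $\ell_{\text{end}}/(\text{slope-product})$, the maximal lap length of $f^n$ on each basic interval tends to $0$.

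Granting this, transitivity follows quickly. Any nonempty open $U$ contains an open subarc $A$ inside some basic interval $I$; once the maximal lap length of $f^n|_I$ drops below $\diam A$, the arc $A$ contains a full lap, so $f^n(A)$ contains a whole basic interval $I'$. A second nonempty open $V$ meets $\Int J'$ for some basic interval $J'$, and strong connectivity supplies a path $I'\to\cdots\to J'$ of some length $k$, whence $f^{n+k}(A)\supset f^{k}(I')\supset J'$ and $f^{n+k}(U)\cap V\neq\emptyset$. The main obstacle is exactly the expansion step: one must convert the purely combinatorial hypotheses into the genuine metric statement that maximal lap lengths vanish, and the linchpin is the cycle-product estimate, where both hypotheses enter — strong connectivity to return through a branching vertex and the non-permutation condition to guarantee that such a vertex lies on every cycle.
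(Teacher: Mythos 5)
Your argument is correct, but note that the paper does not actually prove this lemma: it is quoted verbatim from Baldwin (cited as Corollary~1.11 of \emph{Entropy estimates for transitive maps on trees}), so there is no in-paper proof to compare against. What you have written is a complete, self-contained proof along the standard lines for linear Markov maps: (a) the covering relation computes images, so transitivity forces strong connectivity and rules out the cyclic-permutation case (where $f^m$ restricted to each basic interval is an isometry of that arc onto itself, hence has no dense orbit); (b) conversely, the telescoping inequality $\prod\lambda_{v_i}\geq 1$ along any cycle, with equality only when every vertex on the cycle has out-degree one --- which under strong connectivity collapses the whole graph to that cycle --- yields a uniform cycle-expansion constant $c>1$, so maximal lap lengths of $f^n$ tend to zero and every open set eventually covers a basic interval, after which strong connectivity finishes the job. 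This is exactly the right mechanism, and you correctly identify the crux (converting the combinatorial hypotheses into metric expansion). Two small points to tighten: first, you should say explicitly that every slope $\lambda_v$ is strictly positive --- this follows from strong connectivity (each vertex has an outgoing edge, so no basic interval collapses to a point of $P$) and is needed both for the lap construction and for bounding the remainder term $\mu^{V}$ from below; second, in the cyclic-permutation case the intermediate assertion that $f$ is a homeomorphism of all of $T$ is neither needed nor immediate --- what your argument actually uses is only that $f^m(I_v)=I_v$ with $f^m|_{I_v}$ a slope-one linear self-map of the arc $I_v$, which already precludes a dense $f^m$-orbit. With those remarks the proof stands as a valid substitute for the citation.
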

%======================================================================================%

The following fact is a small modification of result in \cite{hko}.
The main difference is that instead of estimating entropy, we want to control range of perturbation.

\begin{lem}\label{lem:small perturb}
If $f$ is a transitive $P$-Markov and $P$-linear map of a tree $T$, which is not
totally transitive, then for every $\eps>0$, there is a finite set $P'\subset T$ and a totally transitive (hence, exact)
$P'$-Markov and $P'$-linear map $f'\colon T\to T$ such that $P\subset P'$, $f|_P=f'|_P$ and
$\rho(f,f')<\eps$.
\end{lem}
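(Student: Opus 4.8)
The plan is to upgrade a transitive (but not totally transitive) $P$-Markov, $P$-linear tree map $f$ into a totally transitive one $f'$ while keeping the perturbation small, by exploiting the combinatorial description of transitivity given in Lemma~\ref{lem:Markov-transitive}. First I would invoke Lemma~\ref{lem:Markov-transitive}: since $f$ is transitive, its $P$-Markov graph is strongly connected and is not the graph of a cyclic permutation. The failure of total transitivity means that $f$ admits a nontrivial regular periodic decomposition; on the combinatorial side this corresponds to the existence of an integer $d>1$ dividing the $P$-basic intervals into $d$ classes cyclically permuted by the covering relation (so that, after identifying the classes, the graph looks like a cyclic permutation on $d$ vertices). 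The goal is to break this periodicity without destroying strong connectivity and without moving $f$ by more than $\eps$ in the sup metric.

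The key mechanism is a local ''folding'' or ''kink'' perturbation supported on one $P$-basic interval. Concretely, I would pick a $P$-basic interval $I$, subdivide it by inserting a small number of new points from $T$ into the set $P$ to form $P'$ (so $P\subset P'$ and all endpoints/branch points remain in $P'$), and then redefine $f$ only on $I$ so that on the new tiny subintervals the map oscillates, creating an extra $f'$-covering edge that links two $P'$-basic intervals lying in the \emph{same} period class $\pmod d$. Because the modification is confined to a single basic interval $I$ and merely wiggles the image within the arc already covered by $f(I)$ (staying $P'$-linear on each new piece and agreeing with $f$ at the endpoints of $I$, hence preserving $f|_P=f'|_P$), the sup-distance $\rho(f,f')$ can be made as small as desired by taking the subdivision fine enough. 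This is exactly the range-control refinement of the entropy-increasing construction of \cite{hko} alluded to in the statement.

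The main obstacle will be verifying that the new $P'$-Markov graph is simultaneously strongly connected \emph{and} not a cyclic permutation, i.e. that the inserted edge genuinely destroys the $\pmod d$ grading while not disconnecting anything. For strong connectivity I would argue that refining $P$ to $P'$ and adding edges can only help: every old covering $I\to J$ still yields covers among the sub-pieces of $I$ and $J$ (since $f'=f$ except on the one modified interval, and there the oscillation still covers all of $f(I)$), so the refined graph remains strongly connected. For the destruction of periodicity I would show that the extra edge connects two basic intervals whose indices are congruent $\pmod d$, so any consistent $\Z/d$-grading of the vertices is now impossible; hence the graph cannot be (a blow-up of) a cyclic permutation for the old period $d$, and by choosing the kink appropriately one rules out every nontrivial period at once. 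Then Lemma~\ref{lem:Markov-transitive} gives that $f'$ is transitive, and the same argument applied to every power $f'^n$ — equivalently, the observation that a strongly connected Markov graph with $\gcd$ of cycle lengths equal to $1$ has all powers strongly connected and non-cyclic — yields total transitivity; transitivity of a totally transitive $P'$-Markov, $P'$-linear tree map is well known to upgrade to exactness, giving the parenthetical claim.

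Finally I would assemble the estimate: for a prescribed $\eps>0$, first fix how fine the subdivision of $I$ must be so that the oscillation's amplitude, and hence $\rho(f,f')$, is below $\eps$; since only finitely many new points are added and $f'$ is defined to be $P'$-linear on each resulting basic interval with $f'(P')\subset P'$ (by placing the images of the new breakpoints at points already in $P'$), the map $f'$ is a genuine $P'$-Markov, $P'$-linear map satisfying all the required properties, which completes the construction.
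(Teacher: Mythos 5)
Your combinatorial framing (break the $\Z/d\Z$ grading of the Markov graph by one extra covering edge) is the right one, but the geometric mechanism you propose is self-contradictory, and this is where the real work of the lemma lies. If the decomposition has length $d$, the $P$-basic intervals are graded by $\Z/d\Z$ and every covering edge goes from class $c$ to class $c+1$, because any interval $f$-covered by $I\subset T_c$ is contained in $f(I)\subset T_{c+1}$. An edge that destroys this grading must therefore go from a (sub)interval of $I$ to a basic interval in the \emph{wrong} class, i.e.\ to an interval that is \emph{not} contained in $f(I)$. So a perturbation that ``merely wiggles the image within the arc already covered by $f(I)$'' produces, after refinement, only edges from class $c$ to class $c+1$: the grading survives and $f'$ is still not totally transitive. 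The new image must genuinely leave $f(I)$.

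Once this is acknowledged, your smallness estimate collapses: the amplitude of the required oscillation is bounded below by the distance from $f(I)$ to the wrong-class target interval, and no amount of refining the subdivision of $I$ shrinks that distance. The paper's proof supplies exactly the two missing ingredients. First, using density of periodic points for transitive tree maps, it enlarges $P$ by finitely many \emph{periodic orbits} (which keeps $f$ both $P$-Markov and $P$-linear, unlike adding arbitrary points) until every $P$-basic interval $J$ \emph{and its image} $f(J)$ have diameter less than $\eps/3$. Second, it places the kink on a basic interval $[q,t]$ with $f(q)=p$ the fixed point: since $p$ lies in every piece $T_i$ of the decomposition, the wrong-class target $[p,z_0]\subset T_0$ is adjacent to the existing image $f([q,t])\ni p$, so the modified map only travels the extra distance $\diam([p,z_0])<\eps/3$, giving $\rho(f,f')\le\diam([p,z_0])+\diam f([q,t])<\eps$. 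Without these two choices the construction cannot be made $\eps$-small. (Your closing appeal to ``the gcd of cycle lengths becomes $1$'' is also only sketched; the paper instead verifies total transitivity directly, by assuming a nontrivial terminal decomposition for $f'$ and deriving a contradiction from the exactness of $f^n|_{T_i}$. That part of your outline could probably be completed, but the two points above are genuine gaps.)
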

\begin{proof}
%Let $p$ be a periodic point of $f$ and denote $Q=\orbp(p,f)$. Observe that $f$ is also $P\cup Q$-Markov, so for any $\delta>0$
%we may assume that every $P$-basic interval has diameter bounded by $\delta<\eps/3$.
%We assume that $\delta$ is sufficiently small, so that $\diam f(J)<\eps/3$ for any $P$-basic interval.
First note that for any periodic point $r$ the map $f$ is also $(P\cup \orbp(r,f))$-Markov and $(P\cup \orbp(r,f))$-linear.
It is also well known that transitive maps on trees have dense periodic points (e.g. see \cite{hko_short}) and
so adding finitely many points to $P$ if necessary, we may assume that
for each $P$-basic interval $J$, the diameter of both $J$ and $f(J)$ is less than $\eps/3$.

Since $(T,f)$ is not totally transitive, then by Theorem 4.1 there exists a periodic decomposition $T_0, T_1, \ldots, T_{n-1}$ with at least two elements.
There is a fixed point $p$ in $T$, hence $p\in T_i$ for every $i$. We must have
$T_i \cap T_j=\set{p}$ for $i\neq j$, because elements of the decomposition have pairwise disjoint interiors.
But then the number of elements in each decomposition cannot exceed the number of connected components of $T\setminus \set{p}$.
Hence, without loss of generality, we may assume that $\set{T_i}_{i=0}^{n-1}$ is a terminal decomposition, which in particular implies by Theorem 4.1
that $f^n|_{T_i}$ is totally transitive for each $i=0,1,\ldots,n-1$. But since periodic points are dense
$f^n|_{T_i}$ is mixing (e.g. see \cite{hko_short}). Clearly for each $m$, $P$-linear map can have finitely many periodic points with period less than $m$ so $f^n|_{T_i}$ is exact (see \cite[Theorem~7.3]{hko}).
Obviously, each map $f^n|_{T_i}$ is also $P_i$-Markov and $P_i$-linear, where $P_i=f^{-n}(P) \cap T_i$.
There
is a point $q\neq p$ such that $f(q)=p$ and without loss of generality we may
assume that $q\in T_0$. We may also include $q$ in $P$ (when $q\not\in P$) and after this modification $f$ remains
$P$-linear and $P$-Markov.

Let $[q,t]$ be a $P$-basic interval in $T_0$. For $j=0,1,\ldots,n-1$ let $I_j=[p,z_j]$ denote a $P$ basic
interval in $T_j$ containing $p$.
Fix any points $r,s\in (q,t)$ and assume that $q<r<s<t$ with respect to ordering in $[q,t]$.
First, we put $f'(x)=f(x)$ everywhere except $(q,t)$. Next we define $f'(r)=f(t)$, $f(s)=z_0$ and make $f'$ linear on
intervals $[q,r]$, $[r,s]$ and $[s,t]$. Observe that in the Markov graph of $f'$, the set of vertices which can be reached by an edge from any of the vertices $[q,r]$, $[r,s]$ or $[s,t]$
contains all vertices that can be reached from $[q,t]$ in Markov graph for $f$. Similarly, if there is an edge from $J$ to $[q,t]$ in Markov graph of $f$, then there is also an
edge from $J$ to any of the vertices  $[q,r]$, $[r,s]$ or $[s,t]$ in the Markov graph of $f'$. Therefore the Markov graph of $f'$ is strongly connected, since the Markov graph of
$f$ is strongly connected by Lemma~\ref{lem:Markov-transitive}.
Finally, observe that
$$
f'(T_0)\supset f'([q,t])\supset f([q,t])\cup [p,z_0]\supset [p,z_0]\cup [p,z_1],
$$
and hence $f'(T_0)$ intersects interiors of both $T_0$ and $T_1$.% So even if $p$ is the unique fixed point of $f'$, there is no chance for periodic decomposition of $f'$ with two or more elements.

We claim that $f'$ is totally transitive. Repeating arguments from the beginning of the proof, there exists a terminal decomposition $D_0,D_1,\ldots, D_{k-1}$
for $f'$. If $k=1$ then $f'$ is totally transitive and there is nothing to prove. In the other case $D_i\cap D_j=\set{p}$ for each $i\neq j$
and $p$ is the unique fixed point. Then $[q,t]$ is a subset of one of the sets $D_i$, say $[q,t]\subset D_0$.
Note that for every $P$-basic interval $I$ we have $f(I)\subset f'(I)$ and hence, since each $f^n|_{T_i}$ is exact, there is $l>0$ (which is a multiple of $n$)
such that $f^l([p,z_i])=T_i$ for each $i=0,1,\ldots,n-1$, hence we have $(f')^{l+1}([q,t])\supset T_{0} \cup T_{1}$.
Similarly $(f')^{l+2}([q,t])\supset [p,z_0]\cup[p,z_1]\cup T_2$ and so $(f')^{2l+2}([q,t])\supset T_0\cup T_1\cup T_2$. Repeating this procedure inductively, we eventually obtain
$$
(f')^{(n-1)(l+1)}([q,t])\supset T.
$$
In particular $(f')^{(n-1)(l+1)}(D_0)\supset D_0 \cup D_1$ which is a contradiction.
Indeed $f'$ is totally transitive, hence exact.

The proof is finished by the fact that
$$
d(f(x),f'(x))\leq \diam([z_0,p])+\diam f([q,t])\leq \eps/3+\eps/3<\eps.
$$
\end{proof}

The next lemma provides a tool for extending a transitive map on a tree $T'$ to a tree $T$ obtained by attaching edges at its end points,
provided that the dynamics restricted to these new edges is transitive.

\begin{lem}\label{lem:tree_attach}
Let $T$ be a tree, let $T'\subset T$ be a tree invariant for a map $f\colon T\to T$.
Let $C_0, \ldots, C_{n-1}\subset T$  be closed trees with pairwise disjoint interiors and such that $f(C_i)=C_{i+1 (\text{mod }n)}$
and $f(p_i)=p_{i+1 (\text{mod }n)}$ where $\set{p_i}=C_{i}\cap T'$ is their common endpoint (not necessarily $p_i\neq p_j$).
Finally assume that there is a partition $P$ such that $f$ is $P$-Markov and $P$-linear, and $f$ is transitive
restricted to any of sets $T'$ and $\bigcup_{i=0}^{n-1} C_i$.

Then for every $\eps>0$, there is a totally transitive (hence, exact)
$P'$-Markov and $P'$-linear map $f'\colon T\to T$ such that $P\subset P'$, $f|_P=f'|_P$ and
$\rho(f,f')<\eps$.
\end{lem}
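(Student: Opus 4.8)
The plan is to prove Lemma~\ref{lem:tree_attach} by leveraging Lemma~\ref{lem:small perturb} as a black box, reducing the problem of "attaching transitive pieces" to a problem of "making a transitive map totally transitive by a small perturbation." The strategy is to first exhibit a single map $g$ on $T$ which is transitive (and $P$-Markov, $P$-linear, agreeing with $f$ on $P$) by combining the given dynamics on $T'$ with the dynamics on the attached trees $\bigcup_{i=0}^{n-1} C_i$, and then to apply Lemma~\ref{lem:small perturb} to $g$ to upgrade it to a totally transitive map $f'$ while keeping the perturbation small. Since Lemma~\ref{lem:small perturb} already preserves $P$-Markov/$P$-linear structure, fixes $f$ on $P$, and produces an exact map close to the original, the final conclusions about $f'$ follow immediately once we have established that $g$ is transitive.

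\medskip

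First I would describe the combined map $g$. On $T'$ we keep $g=f$, and on each $C_i$ we keep $g=f$ as well; since $f$ is already defined on all of $T$ and respects the cycle $f(C_i)=C_{i+1(\mathrm{mod}\,n)}$ with the matching behavior on the connecting endpoints $p_i$, the honest statement is that $g=f$ is already a candidate. The content, then, is to show that the hypothesis "transitive on $T'$" together with "transitive on $\bigcup C_i$" forces $g$ to be transitive on all of $T$. The natural mechanism is the $f$-covering relation on $P$-basic intervals: I would argue that the Markov graph of $f$ on $T$ is strongly connected by showing that one can travel, via covering relations, from any basic interval inside the $C_i$'s into $T'$ and back. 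The key observation is that the endpoints $p_i$ lie in both $T'$ and $C_i$, so a $P$-basic interval in $C_i$ adjacent to $p_i$ shares the point $p_i$ with a $P$-basic interval in $T'$; transitivity of $f|_{T'}$ (via Lemma~\ref{lem:Markov-transitive}) gives strong connectivity of the $T'$-part of the graph, transitivity of $f|_{\bigcup C_i}$ gives strong connectivity among the $C_i$-intervals, and the shared boundary points $p_i$ provide the bridging edges linking the two subgraphs.

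\medskip

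Having established that $f$ is transitive on $T$ with its $P$-Markov, $P$-linear structure, I would then invoke Lemma~\ref{lem:small perturb}. There is a small case distinction: if $f$ happens already to be totally transitive on $T$, then by the density of periodic points for transitive tree maps together with \cite[Theorem~7.3]{hko} it is in fact exact, and we simply set $f'=f$ and $P'=P$. Otherwise Lemma~\ref{lem:small perturb} applies directly and yields the desired $P'\supset P$, the map $f'$ with $f|_P=f'|_P$, totality of transitivity (hence exactness), and $\rho(f,f')<\eps$. Since all four required properties are delivered verbatim by that lemma, nothing further is needed.

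\medskip

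The main obstacle I anticipate is the transitivity of the combined map, specifically verifying strong connectivity of the Markov graph across the junction. The subtlety is that the $p_i$ need not be distinct, and the covering relation between a basic interval in $C_i$ and one in $T'$ must be extracted purely from the Markov structure rather than assumed; I would need to check carefully that transitivity of $f$ restricted to $\bigcup_{i=0}^{n-1} C_i$ genuinely produces $f$-covering edges leading back to an interval incident to some $p_i$, and symmetrically that $f|_{T'}$ produces an edge from a $T'$-interval into a $C_i$-interval. Once this bridging is confirmed, Lemma~\ref{lem:Markov-transitive} converts strong connectivity (plus the non-cyclic-permutation condition, which follows because the two transitive pieces overlap at shared points) into transitivity of $f$ on $T$, and the rest of the argument is an immediate appeal to Lemma~\ref{lem:small perturb}.
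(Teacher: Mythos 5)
The central step of your argument --- that $g=f$ is already transitive on $T$ --- is false, and the bridging edges you hope to ``confirm'' do not exist. By hypothesis $T'$ is $f$-invariant, so $f^k(T')\subseteq T'$ for all $k$, and likewise $f\bigl(\bigcup_{i}C_i\bigr)=\bigcup_{i}C_i$. Consequently no $P$-basic interval contained in $T'$ can $f$-cover a $P$-basic interval contained in some $C_i$: its image lies in $T'$, which meets $C_i$ only in the single point $p_i$. Symmetrically, no basic interval in $\bigcup_i C_i$ covers one in $T'$. Sharing the endpoint $p_i$ contributes nothing to the covering relation, which is defined by images of subarcs, not by adjacency. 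Concretely, for any nonempty open $U\subseteq T'\setminus\{p_0,\dots,p_{n-1}\}$ we have $f^k(U)\subseteq T'$ for all $k$, so $f^k(U)$ never meets $\Int_T(C_0)$ and $f$ is not transitive on $T$; Lemma~\ref{lem:Markov-transitive} can only confirm this, since the $P$-Markov graph of $f$ decomposes into two subgraphs with no edges between them.

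The missing idea --- and the actual content of the paper's proof --- is that one must first perturb $f$ near the junction to manufacture the cross edges, and only then reduce to Lemma~\ref{lem:small perturb}. The paper takes the basic intervals $I_0=[p_0,z_0]\subset C_0$ and $I_0'=[p_0,z_0']\subset T'$ adjacent to $p_0$, inserts new partition points $r,s$ and $r',s'$ in their interiors, and redefines the map on $(p_0,z_0)$ and $(p_0,z_0')$ (e.g.\ $f'(r)=z_1'$, $f'(s)=p_1$, $f'(r')=z_1$, $f'(s')=p_1$, linear in between) so that $I_0$ now covers the basic interval of $T'$ at $p_1$ and $I_0'$ covers the basic interval of $C_1$ at $p_1$. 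Only after this surgery is the $P'$-Markov graph strongly connected, and only then does the appeal to Lemma~\ref{lem:Markov-transitive} and the final upgrade via Lemma~\ref{lem:small perturb} go through. Your outer scaffolding (establish transitivity on $T$, then apply Lemma~\ref{lem:small perturb}) matches the paper's, but without a perturbation at $p_0$ the first half is not merely unproved --- it is unprovable.
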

\begin{proof}
%First, observe that by Lemma~\ref{lem:small perturb} we may assume that $f_{\mid }T^{'}$ and $f^n_{\mid }C_i$ are totally transitive,
%because if not then first we can perturb it to a totally transitive map and then perturb this new map again, using $\eps/2$
%perturbation in both cases.

The proof is similar to the proof of Lemma~\ref{lem:small perturb}.
First, periodic points are dense in $T$, so again we may assume that $P$-basic intervals have diameters smaller than
some fixed $\delta>0$.
We denote by $I_j=[p_j,z_j]$ (resp. $I_j'=[p_j,z_j']$) the $P$ basic
interval in $C_j$ (resp. $T'$) containing $p_j$ (there is only one P-basic interval with containing $p_j$ in $T'$ and only one in $C_j$ since $p_j$ is an end point of both $T'$ and $C_j$ respectively).

Fix any points $r,s\in (p_0,z_0)$ and $r',s'\in (p_0,z_0')$ and assume that $p_0<r<s<z_0$, $z_0'<s'<r'<p_0$  with respect to ordering in $[p_0,z_0]$ and $[p_0,z_0']$, respectively.
We define $f'(x)=f(x)$ everywhere except at points of the open arcs $(p_0,z_0)$ and $(p_0,z_0')$. We define $f'(r)=z_1'$, $f'(s)=p_1$ and make $f'$ linear on
intervals $[p_0,r]$, $[r,s]$, $[s,z_0]$. Similarly, we put $f'(r')=z_1$, $f(s')=p_1$ and make $f'$ linear on
intervals $[p_0,r']$, $[r',s']$, $[s',z_0']$.

Denote $P'=P\cup \set{s,s',r,r'}$ and observe
that if we consider a subgraph in the $P'$-Markov graph of $f'$ restricted to $P'$-basic intervals in $T'$ then it is strongly connected,
since $P$-Markov graph of $f$ restricted to $P$-basic intervals in $T'$ was strongly connected.
Similarly, graph defined by $P'$-basic intervals in $\bigcup_{i=0}^{n-1} C_i$ is strongly connected.
But there are also edges joining these two subgraphs together in $P'$-Markov graph of $f'$ so this graph is strongly connected too.
This shows that $f'$ is transitive by Lemma~\ref{lem:Markov-transitive}. If it is not totally transitive, we can make it mixing by application of Lemma~\ref{lem:small perturb}.
But since we can control the size of $P$-basic intervals when starting our construction, and since we can also control the range of perturbation
in Lemma~\ref{lem:small perturb}, the condition $\rho(f,f')<\eps$ can be easily satisfied.
\end{proof}

%By $n$-star we mean a tree with one branch point and $n$ endpoints.
%The following theorem is a restatement of \cite[Theorem 3.2.]{Baldwin}.
%
%%======================================================================================%
%\begin{thm}\label{thm:edge-adding}
%Let $T$ be a tree, and $f\colon T\to T$ be transitive and Markov. Let $z$ be a
%fixed point of $f$, and let $T'$ be the tree which is obtained from $T$ by attaching
%an arc to $z$ at one of the endpoints of the arc. Then for every $\eps>0$, there is
%a transitive Markov map $f'\colon T'\to T'$ such that both
%ends of the new arc are fixed for $f'$ and $\rho(f',g)<\eps$
%where $g(x)=f(x)$ for $x\in T$ and $g(x)=x$ when $x\in T'\setminus T$.
%\end{thm}
%\begin{rem}
%Note that the map $f'$ provided by Theorem~\ref{thm:edge-adding} is mixing, since it has at least two fixed points.
%\end{rem}

\begin{cor}\label{cor:dyn_on_star}
 For any $n>0$, any $\eps>0$ and any $n$-star $T$ there is a mixing map $f\colon T\to T$ such that endpoints as well as the branch points are fixed points for $f$
and furthermore $\rho(f,\id)<\eps$.
\end{cor}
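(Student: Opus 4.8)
The plan is to build the desired map on an $n$-star $T$ by first constructing a mixing (indeed exact) map on a single arc that fixes both of its endpoints, and then transporting this to each beam of the star using Lemma~\ref{lem:tree_attach}. I view the $n$-star as the union of its $n$ beams $C_0,\ldots,C_{n-1}$, each an arc having the branch point $b$ as a common endpoint, so $C_i\cap C_j=\{b\}$ for $i\neq j$. The key observation is that Lemma~\ref{lem:tree_attach} is tailor-made for this: if I take $T'=\{b\}$ (a degenerate invariant ``tree'' consisting of the single fixed branch point) and let the $C_i$ cyclically permuted trees be the beams, then attaching a transitive dynamics on $\bigcup_{i=0}^{n-1}C_i$ produces a totally transitive, hence exact, $P'$-Markov and $P'$-linear map $f'$ with $\rho(f,f')<\eps$. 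Since exactness implies mixing, this would yield the corollary.

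\medskip
First I would fix an $n$-star $T$ with branch point $b$ and beams $B_1,\ldots,B_n$, each an arc $[b,e_k]$ with endpoint $e_k$. The main step is to produce a starting map $f\colon T\to T$ that is $P$-Markov, $P$-linear and transitive when restricted to $\bigcup C_i$, with $b$ and all $e_k$ fixed, and then apply Lemma~\ref{lem:tree_attach} with $T'=\{b\}$. To build such an $f$, the simplest route is to treat the beam union as a tree on which I can explicitly write down a Markov-linear map: subdivide each beam into finitely many basic intervals, and choose the covering relation so that the associated Markov graph is strongly connected and is not the graph of a cyclic permutation. By Lemma~\ref{lem:Markov-transitive} this makes the map transitive. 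The fixing of endpoints is arranged by insisting $f(e_k)=e_k$ and $f(b)=b$ and including all these points in $P$.

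\medskip
Then I would invoke Lemma~\ref{lem:tree_attach} to upgrade transitivity to total transitivity (hence exactness, hence mixing) at the cost of an arbitrarily small $C^0$ perturbation controlled by $\eps$. Because the lemma guarantees $f|_P=f'|_P$ and $P$ contains $b$ and all the $e_k$, the perturbed map $f'$ still fixes the branch point and all endpoints. Finally the bound $\rho(f,\id)<\eps$ is obtained by starting from an $f$ that is already $\eps/2$-close to the identity --- this is possible because I am free to make all $P$-basic intervals of diameter less than some prescribed $\delta$ and to arrange that $f$ moves points within their own beam by a small amount --- and then using the perturbation control in Lemma~\ref{lem:tree_attach} to keep the total displacement below $\eps$.

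\medskip
The main obstacle I anticipate is the simultaneous control of two competing requirements: getting a genuinely \emph{transitive} (non-cyclic, strongly connected) Markov structure on the beam union while keeping the map $C^0$-close to the identity. A transitive map must spread mass across all beams, which naively forces large displacements; the resolution is to let transitivity be achieved only after passing to a high iterate, so that each single application of $f$ stays near the identity while the Markov graph still forces the strong-connectivity and non-cyclicity needed by Lemma~\ref{lem:Markov-transitive}. Checking that one can subdivide finely enough to make both the $P$-basic intervals and their images small, while preserving the covering relations that guarantee strong connectivity, is the technical heart of the argument; the rest follows formally from the lemmas already established.
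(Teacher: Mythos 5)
Your first plan --- taking $T'=\{b\}$ degenerate and letting Lemma~\ref{lem:tree_attach} act on the beams $C_0,\dots,C_{n-1}$ as a cyclically permuted family --- cannot work. If $f(C_i)=C_{i+1\ (\mathrm{mod}\ n)}$ with $n\geq 2$, then $f$ already displaces points of $C_i$ into a different beam, a distance bounded below by a constant depending only on $T$; since the lemma only guarantees $\rho(f,f')<\eps$ and $f'|_P=f|_P$, the perturbed map inherits this large displacement, so $\rho(f',\id)<\eps$ fails, and the endpoints $e_k$ cannot be fixed (each $e_k\in C_k$ is sent into $C_{k+1}$). The cyclic-permutation feature of Lemma~\ref{lem:tree_attach} is simply the wrong mechanism here; it is incompatible with the two conclusions you are trying to reach.

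Your fallback --- ``write down a transitive Markov-linear map on the star directly, with strongly connected non-cyclic Markov graph, and somehow keep it close to the identity'' --- names the right difficulty but does not resolve it; you explicitly defer the construction to ``the technical heart of the argument,'' which is exactly the content of the corollary. The missing idea, which is how the paper proceeds, is to reverse the order of operations: first subdivide $T$ into $P$-basic intervals of diameter less than $\eps/2$ and put on \emph{each one separately} a transitive piecewise linear Markov map fixing its two endpoints (e.g.\ the three-fold map). At this stage every tiny interval is \emph{invariant}, so the map is automatically $\eps/2$-close to the identity and fixes every partition point, in particular $b$ and all the $e_k$. Then apply Lemma~\ref{lem:tree_attach} \emph{recursively}, attaching one adjacent interval at a time to the tree built so far (so the lemma is used with a single invariant $C_0$, not with a cyclically permuted family); each application introduces only a small, localized perturbation near the attachment point, and the cumulative perturbation is kept below $\eps/2$. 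The limit of this finite process is totally transitive with all endpoints fixed, hence mixing. Without this ``locally invariant, then glue'' step, your argument has no construction of the starting map and the proof is incomplete.
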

\begin{proof}
The result is a direct consequence of Lemma~\ref{lem:tree_attach}. First fix any finite set $P$ and split $T$ into sufficiently small $P$-basic intervals.
On each such interval define a transitive piecewise linear interval Markov map which fixes endpoints (e.g. the three fold map $f(0)=f(2/3)=0$, $f(1/3)=f(1)=1$ ).
Start with a basic interval containing an endpoint of star and then apply Lemma~\ref{lem:tree_attach} recursively, attaching adjacent intervals to it one-by-one
(creating a bigger tree in each step). If the initial intervals were small, and also the perturbation was
sufficiently small in each application of Lemma~\ref{lem:tree_attach} then at the end the map is an $\eps$-perturbation of identity.
By the construction it is totally transitive and also each endpoint is a fixed point, hence the map must be mixing.
\end{proof}

\begin{thm}\label{mix:constr}
Let $D$ be a dendrite with dense set of endpoints and such that degree of any branch point is equal to $n\geq 3$. Then
there exists a mixing map on $D$ such that every branch point in a periodic orbit and every arc in
$D$ contains a point which has a dense orbit and is not an endpoint.
\end{thm}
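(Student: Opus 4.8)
The plan is to produce $f$ as a uniform limit of exact Markov tree maps living on an increasing sequence of subtrees that exhausts $D$, arranging at each finite stage that the branch points already seen become periodic and that the total perturbation is summable. First observe that the hypotheses force the branch points to be dense: if some arc $(u,v)$ contained no branch point, then all its points would have order $2$, making $(u,v)$ a free arc, which contradicts density of $\End(D)$ (an open free arc contains no endpoint). Using the first point maps recalled in Section~2 (so that $\rho(\pi_{Y},\id)\to0$), I write $D=\overline{\bigcup_k T_k}$ with $T_1\subset T_2\subset\cdots$ finite subtrees of $D$, where $T_{k+1}$ is obtained from $T_k$ by adjoining, at the finitely many branch points of $D$ already present in $T_k$ but not yet of full order $n$, the missing beams (small trees) reaching into the still-uncovered part of $D$, with attached diameters tending to $0$. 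Every branch point of $D$ is countable in number and eventually enters some $T_k$.

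I would then build $f_k\colon T_k\to T_k$ by induction, maintaining that $f_k$ is exact, is $P_k$-Markov and $P_k$-linear for a finite $P_k$ containing all branch and end points of $T_k$ with $f_k(P_k)\subset P_k$, and that every branch point of $D$ lying in $T_k$ is a periodic point of $f_k$. I start from an $n$-star via Corollary~\ref{cor:dyn_on_star}, which gives a mixing map fixing the relevant points. At the inductive step I group the new attaching points into their (finitely many) periodic orbits $p_0,\dots,p_{m-1}$, place over each orbit a cyclically permuted family of stars $C_0,\dots,C_{m-1}$ carrying a transitive dynamics on $\bigcup_i C_i$ (furnished again by Corollary~\ref{cor:dyn_on_star} applied to $f^m$ on one star), and obtain a totally transitive, hence exact, extension $f_{k+1}$ with $P_k\subset P_{k+1}$, $f_{k+1}|_{P_k}=f_k|_{P_k}$, and control on $\rho(f_{k+1}\circ\pi_{T_{k+1}},f_k\circ\pi_{T_k})$, by the perturbation technique behind Lemma~\ref{lem:tree_attach}. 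Refining $P_{k+1}$ keeps all basic intervals and their images of small diameter. The decisive difficulty is exactly this step: forcing the densely many branch points onto periodic orbits while keeping each $f_k$ exact and consistent on $P_k$, performing the attachment along entire finite orbits and at points that need not be endpoints of $T_k$, so one leans on the \emph{method} of Lemma~\ref{lem:tree_attach} rather than its literal hypotheses.

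Passing to the limit, the summable perturbations make $(f_k\circ\pi_{T_k})_k$ Cauchy, converging to a continuous $f\colon D\to D$. The key point is that $f|_{P_k}=f_k|_{P_k}$, since $f_j|_{P_k}=f_k|_{P_k}$ for all $j\ge k$. Hence for any $T_k$-basic interval $J=[a,b]$ the set $f(J)$ is a subcontinuum containing the arc $[f(a),f(b)]=[f_k(a),f_k(b)]=f_k(J)$, so $f(J)\supseteq f_k(J)$; because $f_k(P_k)\subset P_k$ this inclusion propagates through iterates, giving $f^{N}(S)\supseteq f_k^{N}(S)$ for every union $S$ of $T_k$-basic intervals. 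Exactness of $f_k$ then yields $f^{N}(J)\supseteq T_k$ for all large $N$, and since $T_k$ is $\eps_k$-dense in $D$ and every open $U$ contains a $T_k$-basic interval for large $k$, this shows $f$ is mixing. Moreover each branch point of $D$ lies in some $P_k$ and is $f_k$-periodic, hence $f$-periodic because $f|_{P_k}=f_k|_{P_k}$; thus every branch point lies on a periodic orbit.

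Finally, to see that each arc $A=[a,b]$ carries a non-endpoint with dense orbit, fix a countable base $\{V_j\}$ of $D$ and set $G_j=\{x\in A:\orbp(x)\cap V_j\neq\emptyset\}$, which is relatively open in $A$. Given a subarc $\tilde A\subset A$ with nonempty relative interior, density of branch points provides two of them, $c,c'\in\inte[A]{\tilde A}$, lying in a common $T_k$; then $[c,c']\subset\tilde A$ is a union of $T_k$-basic intervals, so $f^{N}([c,c'])\supseteq f_k^{N}([c,c'])=T_k$ for suitable $N$, and for $k$ large enough $T_k$ meets $V_j$, whence $\tilde A\cap G_j\neq\emptyset$. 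Thus each $G_j$ is dense in $A$, and by the Baire property of $A\cong[0,1]$ the set $\bigcap_j G_j$ of dense-orbit points is residual in $A$; it therefore meets the relative interior $(a,b)$, every point of which has order at least $2$ and so is a non-endpoint of $D$. The heart of the argument remains the consistency $f|_{P_k}=f_k|_{P_k}$, which converts approximate into genuine covering and lets both mixing and this arc-covering property descend to the limit without compounding errors.
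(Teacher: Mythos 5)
Your overall architecture matches the paper's: an exhausting sequence of subtrees carrying exact Markov--linear maps, beams attached along periodic orbits of branch points via Lemma~\ref{lem:tree_attach}, a uniform limit, and a covering argument for mixing and for dense orbits in arcs. (Your direct derivation of mixing from $f^N(J)\supseteq f_k^N(J)=T_k$ and your Baire-category argument inside an arc are legitimate, slightly cleaner variants of the paper's route through weak mixing, Theorem~\ref{tt:mix} and nested arcs.) However, there is a genuine gap at exactly the point you flag as ``the decisive difficulty'' and then defer to ``the method of Lemma~\ref{lem:tree_attach}''. That lemma only extends the dynamics once you already have a cyclically permuted family of trees attached at points $p_i$ with $f(p_i)=p_{i+1}$; it says nothing about how to arrange that a \emph{prescribed} branch point of $D$, sitting at an arbitrary interior location of $T_k$, becomes a periodic point of the Markov map $f_k$, nor how to guarantee that its entire $f_k$-orbit consists of branch points of $D$ --- which is what you need in order to attach the missing beams consistently along the whole orbit. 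The paper's resolution is a conjugation device you never supply: since $f_k$ is mixing, it has a periodic orbit $\orbp(p)$ passing arbitrarily close to the target branch point $r$; since branch points of $D$ are dense, one chooses branch points $q_1,\dots,q_k$ of $D$ with $q_i$ close to $f^i(p)$ (and $r$ among them); conjugating by a small piecewise-linear homeomorphism that fixes the partition and sends each $f^i(p)$ to $q_i$ yields a map that is still Markov, linear, mixing and unchanged on the old partition points, and for which $q_1,\dots,q_k$ \emph{is} a periodic orbit. Without this step (or an equivalent one) the induction cannot handle any branch point that is not already an endpoint or the center of your initial star.

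A secondary but real problem is your inductive hypothesis that \emph{every} branch point of $D$ lying in $T_k$ is a periodic point of $f_k$. By your own opening observation the branch points of $D$ are dense in every arc of $D$, so each nondegenerate $T_k$ contains infinitely many of them, while a finite Markov construction can only force finitely many prescribed points onto periodic orbits at each stage. The hypothesis must be weakened to the paper's version: fix an enumeration $r_1,r_2,\dots$ of the branch points in advance, require only that those designated up to stage $k$ are periodic for $f_k$ and that their orbits are frozen under all later perturbations, and ensure every branch point is eventually designated. Your limit argument survives this restatement, but as written the induction hypothesis is unattainable.
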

\begin{proof}
Fix an $\eps>0$ and put $\eps_i=\eps/2^i$.
Let $\mathcal{R}=\set{r_1,r_2,\ldots}$ be a sequence consisting of all branch points in $D$.
Take any two distinct endpoints of $D$ and denote by $T_1$ the arc between them. We will perform a recursive construction.
Let $P_1\subset T_1$ be a finite set containing $\End(T_1)$
and let $f_1$ be a $P_1$-Markov, $P_1$-linear and mixing map on $T_1$, for example $f_1$ can be the standard tent map.
Note that for any branch point $r$ of $D$ contained in $T_1$ the number of connected components of $D\setminus \set{r}$
not intersecting $T_1$ is equal to $n-2$.
There exists $m_1$ such that for any arc $I$ in $T_1$ with diameter at least $\eps_1$ we have that
$d_H(f_1^{m_1}(I),T_1)<\frac{\eps_1}{2}$, i.e. each such $I$ after $m_1$ iterations covers $T_1$ except maybe
arcs of diameter $\eps_1/2$ at endpoints of $T_1$ (here $d_H$ denotes Hausdorff distance induced by $d$). There also exists $\delta_1>0$ such that if $h\colon D\to D$ is a map satisfying $\rho(h|_{T_1},f_1)<\delta_1$
then $d_H(h^{m_1}(I),T_1)<\eps_1$, provided that $I$ is an arc in $T_1$ with diameter at least $\eps_1$.
There exists $\gamma_1<\delta_1/8$ such that if $g_1\colon T_1\to T_1$ is a homeomorphism satisfying $\rho(g_1,\id_{T_1})<\gamma_1$ then
$\rho(g_1\circ f_1 \circ g_1^{-1},f_1)<\delta_1/4$ and $g_1\circ f_1 \circ g_1^{-1}$ is $g_1(P_1)$-Markov. If $g_1$ is $P_1$-linear then $g_1\circ f_1 \circ g_1^{-1}$ is $g_1(P_1)$-linear.

Take the first branch point $r$ in the sequence $\mathcal{R}$ which belongs to $T_1$. Since $f_1$ is mixing, there exists a periodic point $p$
such that $d(p,r)<\gamma_1$ and $p$ is not a branch point in $T_1$ (in fact, it can be arbitrary at this step, since $T_1$ is an arc).
We assume that $\dist(\orbp(p),r)=d(p,r)$.
There exist branch points $q_1,\ldots,q_k$ of $D$ contained in $T_1\setminus \End(T_1)$, where $k=\# \orbp(p)$, such that $d(q_i,f^i(p))<\gamma_1$
and there is no other point $q_j$ in the arc $[q_i, f^i(p)]$ for $1 \leq j\neq i\leq k$, and none of point $q_i$ is a branch point of $T_1$. If we add all points $f^i(p)$ to the original partition of $T_1$ then $f_1$ remains Markov and linear with respect to that new extended partition. But now, if we use piecewise linear map $g_1\colon T_1\to T_1$ that sends each vertex $f^i(p)$ to $q_i$ and fixes points in $P_1$, then $g_1$ is $\gamma_1$ perturbation of $\id_{T_1}$ and by the definition is $P'$-linear, where $P'=P_1\cup \set{f^i(p) : i\geq 0}$. If we put $\tilde{f_1}=g_1\circ f_1 \circ g_1^{-1}$, then $\rho(\tilde{f_1},f_1)<\delta_1/4$ and $\tilde{f_1}$ is $g_1(P')$-Markov and $g_1(P')$-linear, but now points
$q_1,\ldots,q_k$ form a periodic orbit. Note that the image of any other point in the original partition does not change.
In particular, if there was a periodic orbit in $P_1$, it remains a periodic orbit. Note that $\tilde{f_1}$ is mixing, since it is conjugate with a mixing map.

In each connected component of $T_1\setminus \set{q_i}$ we have $n-2$ connected components not intersecting $T_1$.
Taking in each such component an arc from $q_i$ to an endpoint of $D$ we obtain arcs $L_i^{0},\ldots,L_i^{n-2}$.

We are going to define a finite set $Q_1$ and construct a transitive $Q_1$-Markov, $Q_1$-linear map $h$, sending arcs $L_i^j$ into a cycle in such a way that $h$ is a permutation on $\set{q_i : 1\leq i \leq k}$ and $\rho(h,\pi)<\frac{\delta_1}{8}$, where $\pi\colon \bigcup L_i^j \to \bigcup L_i^j$ is a map such that
$\pi|_{L_i^j}\colon L_i^j \to L_{\hat i}^{\hat{j}}$ is linear homeomorphism, where $\hat{i}=i+1$, $\hat{j}=j$ when $i<k$ and $\hat{i}=1$, $\hat{j}=j+1 \;(\text{mod }n-1)$ otherwise.

Denote $\Lambda=[0,1]\times \set{1,\ldots,k}\times\set{0,\ldots,n-2}$
Identify each $L_i^j$ (by a linear homeomorphism) with the interval $[0,1]\times \set{i,j}$, where $q_i=(0,i,j)$ for each $j$, and fix any $\xi>0$.
By Corollary~\ref{cor:dyn_on_star} there exists a mixing map $\phi\colon [0,1]\to [0,1]$
such that $\rho(\phi,\id)<\xi$. Define a map $h\colon \Lambda\to \Lambda$ by
$$
h(x,i,j)=\begin{cases}
(x,i+1,j) &, i<k\\
(\phi(x),1,j+1 (\text{mod }n-1)) &, i=k\\
\end{cases}
$$
%If we define the relation $(x,i,j)\sim (x',i',j')$ provided that $(x,i,j)=(x',i',j')$ or $x=x'=0$ and $i=i'$
%then
The map $h \colon \Lambda \to \Lambda$ is well defined transitive map.
Now it is enough to identify each $L_i^j$ with $([0,1]\times \set{i,j})$ by linear homeomorphism
sending $q_i$ to $[(0,i,j)]$ and denote by $h$ the map obtained after this identification.
Since $\xi$ can be arbitrarily small, condition $\rho(h,\pi)<\frac{\delta_1}{8}$ is satisfied.

Put $T_2=T_1\cup \bigcup L_i^j$ and let $f_2$ be a mixing map on $T_2$ obtained by Lemma~\ref{lem:tree_attach} such that $\rho(f_2,\tilde{f}_1\cup h)<\delta_1/8$. Note that each $q_i$ is a branching point in $T_2$.
By the definition $f_2$ is $P_2$-Markov and $P_2$-linear with respect so some finite set $P_2\supset P_1$. By the construction
$\rho(f_1,(f_2)_{\mid T_1})<\delta_1/4$.

Applying the above procedure, we can inductively construct a sequence of trees $T_1\subset T_2 \subset \ldots$
such that $\overline{\bigcup T_i}=D$ (because each branch is eventually included in $T_i$ for $i$ large enough), together with
$\delta_i>0$, $m_i>0$ and mixing maps $f_i\colon T_i\to T_i$ such that:
\begin{enumerate}
\item\label{thm10:c1} For any $j>i$ we have $\rho(f_j,f_i)< \delta_i \sum_{s=i}^{j-1} 2^{-(s+1)}<\delta_i/2$.
\item\label{thm10:c2} For any arc $I$ in $T_j$ with diameter at least $\eps_j$ we have that
$d_H(h^{m_j}(I),T_j)<\eps_j$ provided that $\rho(h|_{T_j},f_j)<\delta_j$.
In particular $d_H(f_i^{m_j}(I),T_j)<\eps_j$ provided that $i\geq j$.
\item\label{thm10:c3} For each branch point $r_n$ of $D$ there is $N>0$ such that $r_n$ is a periodic point of $f_N$ and $f_j^k(r_n)=f_N^k(r_n)$
for all $j\geq N$ and $k\geq 0$.
\end{enumerate}

Note that using first point maps $R_i\colon D \to T_i$ (retractions) we obtain a Cauchy sequence $f_i \circ R_i$,
hence there exists a map $F\colon D\to D$ such that $f_i\circ R_i$ converges to $F$ in the metric $\rho$ (i.e. we have uniform convergence).
By \eqref{thm10:c1} we obtain that $\rho(F|_{T_i}, f_i)\leq \delta_i/2 <\delta_i$, hence by \eqref{thm10:c2} for any arc $J$ in $D$ and any $j$ there exists an arc $I\subset J$ and $m_j$ such that $d_H(f_i^{m_j}(I),T_j)<\eps_j$
for any $i>j$. In particular $d_H(F^{m_j}(I),T_j)\leq \eps_j$. Constructing a nested sequence of arcs, it is not hard to
show that $J$ contains a point with dense orbit.

In order to show that $F$ is weakly mixing, fix any nonempty open sets $U_1,U_2,U_3,U_4$.
There is $i>0$ such that $T_i \cap U_s\neq \emptyset$ for $s=1,2,3,4$.
For $s=1,2,3,4$ fix any arc $I_s\subset T_i\cap U_s$ and let
$$
\delta=\min_{1\leq s\leq 4}\set{\diam(I_s)}.
$$
Take $j>i$ sufficiently large, so that $\delta>3\eps_j$.
By the definition each $I_s\subset T_j$ and so \eqref{thm10:c2} we obtain that $\dist(F^{m_j}(I_s),I_{s'})<\eps_j$ for all $s,s'=1,2,3,4$,
which in particular implies that $F^{m_j}(U_1)\cap U_2\neq \emptyset$ and $F^{m_j}(U_3)\cap U_4\neq \emptyset$.
Indeed $F$ is weakly mixing, hence $F$ is
mixing by Theorem~\ref{tt:mix}.

\end{proof}

\section{Periodic points on dendrites and other continua}

It is easy to verify that if $f$ has dense periodic points then so does the induced map $2^f$.
It is also not surprising that the converse implication does not hold in general.
In this section we will show that on some dendrites
such a converse implication is valid (in fact, as we will see, it is valid on a wider class of continua containing some dendrites).

For the reader convenience we present here a standard, still very useful fact (see \cite[Theorem 8.16]{nadler92} and \cite[Theorem 8.23]{nadler92}).
\begin{lem}\label{lem:nad}
Let $X$ be a continuum, let $\alpha\subset X$ be an arc and let $g\colon \alpha\to X$ be continuous.
Then $g(\alpha)$ is arcwise connected and locally connected.
\end{lem}

Now we are ready to prove the following.

\begin{thm}\label{densep1}
Let $X$ be a continuum and $\fun{f}{X}{X}$ be a continuous map such that
$\mbox{Per}(2^f)$ is dense in $2^X.$ Let $[a,b]$ be a free arc in $X$ with
$a \ne b.$ Then $\mbox{Per}(f) \cap (a,b) \ne \emptyset.$
\end{thm}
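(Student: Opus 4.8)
The plan is to exploit the density of $\Per(2^f)$ in $2^X$ to produce a periodic compact set that is ``concentrated'' inside the free arc $[a,b]$, and then extract from it an actual periodic point of $f$ lying in the open arc $(a,b)$. First I would fix a small closed subarc $[c,d]\subset (a,b)$ with $a<c<d<b$, and use the freeness of $[a,b]$ to note that the set $\langle (a,b)\rangle$ of compact subsets of the open arc is open in $2^X$ (since $(a,b)$ is open in $X$ when $[a,b]$ is free). A natural candidate point of $2^X$ to approximate is the singleton $\{m\}$ where $m$ is the midpoint of $[c,d]$, or perhaps a small subarc; by density of $\Per(2^f)$ there is a periodic set $A\in\Per(2^f)$ as close as we like to this candidate, and by choosing the approximation fine enough we can guarantee $A\subset (a,b)$, so $A$ is a periodic compact subset of the free arc.

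Next I would let $n$ be the period of $A$, so $f^n(A)=A$, and study the orbit $A,f(A),\dots,f^{n-1}(A)$. The key structural input is Lemma~\ref{lem:nad}: since $A$ sits inside the arc $[a,b]$, each iterate $f^j(A)$ is a continuous image of a subset of an arc, so these images are arcwise connected and locally connected. More importantly, I expect to work with the minimal subarc $[a',b']\subseteq (a,b)$ containing $A$ and track how $f^n$ moves the two endpoints $a',b'$ of this ``convex hull'' of $A$. Because $f^n(A)=A$, the set $A$ returns to itself, and I would try to show that $f^n$ maps the arc $[a',b']$ in a way that forces a fixed point of $f^n$ inside $(a',b')\subset(a,b)$ by an interval-map fixed point argument: if $f^n([a',b'])\supseteq [a',b']$ in the appropriate covering sense (which should follow because $A$, and hence its extreme points, is invariant under $f^n$), then the standard intermediate-value/covering argument on the free arc yields a point $z\in(a',b')$ with $f^n(z)=z$, i.e. a periodic point of $f$ in $(a,b)$.

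The main obstacle I anticipate is controlling the behavior of the iterates $f^j(A)$ for $0<j<n$: a priori these intermediate images can leave the arc $[a,b]$ entirely and wander through $X$ before the set returns at time $n$, so the naive ``interval map'' picture need not apply directly to $f$ but only to the return map $f^n$ restricted to the arc. The careful point is to verify that $f^n$, restricted to the subarc $[a',b']$ of the free arc, genuinely covers $[a',b']$ — equivalently, that the endpoints of $A$ are not simply permuted in a way that produces a period-two orbit of endpoints without an interior fixed point. I would handle this by passing to $f^{2n}$ if necessary (so that each extreme point of $A$ is fixed rather than swapped), reducing to a genuine self-covering of the arc, and then applying the elementary fact that a continuous map of an arc into the continuum $X$ whose image contains the arc must fix an interior point. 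Ensuring that this fixed point is genuinely \emph{interior} to $(a,b)$, rather than at $a$ or $b$, is where the freeness of the arc and the strict inclusion $A\subset(a,b)$ (arranged in the first step) are essential.
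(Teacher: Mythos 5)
Your overall strategy (approximate by a periodic set inside the free arc, pass to its convex hull in the arc, and run a covering/fixed-point argument for $f^n$) points in the right direction, but it has a genuine gap at its crucial step. You invoke ``the elementary fact that a continuous map of an arc into the continuum $X$ whose image contains the arc must fix an interior point.'' This is false in a general continuum: if $X$ contains a circle, the image of the subarc $[a',b']$ under $f^n$ can leave the free arc, wrap around the circle the ``long way,'' and return so that it contains $[a',b']$ (even with the two endpoints of the hull fixed or swapped) while $f^n$ has no fixed point in $[a',b']$. The intermediate value theorem is only available when the image stays inside an arc, and nothing in your setup forces that. This is precisely the incompleteness the paper attributes to the earlier argument of Gengrong et al., which ``works fine for trees'' but not for graphs with cycles. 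A secondary problem: from $f^n(A)=A$ you cannot conclude that the extreme points $a'=\min A$, $b'=\max A$ are fixed or swapped by $f^n$; the restriction of $f^n$ to $A$ need not be injective, and $f^n(a')$ can be any point of $A$. So passing to $f^{2n}$ does not repair the argument.

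The paper's proof avoids both problems by using \emph{two} periodic sets $A_1\subset(a_1,p)$ and $A_2\subset(p,b_1)$ on opposite sides of a chosen point $p$, and by working with \emph{preimages} of the extreme points (points $c_i',d_i'$ inside the hull of $A_i$ with $f^n(c_i')=c_i=\min A_i$, $f^n(d_i')=d_i=\max A_i$, chosen so that no other preimage lies between them). Lemma~\ref{lem:nad} plus this minimality forces a dichotomy: either $f^n([c_i',d_i'])=[c_i,d_i]$ (and the honest interval covering argument applies, since the image is exactly that subarc of the free arc), or the image is a locally connected continuum containing $c_i,d_i$ and missing $(c_i,d_i)$, in which case uniqueness of arcs inside the free arc forces it to contain the complementary piece $[d_1,b]$, hence the hull of the \emph{other} periodic set. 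In that remaining case one gets a two-step covering loop $[c_1^\star,d_1^\star]\to[c_2',d_2']\to[c_1^\star,d_1^\star]$ whose images are genuine subarcs of $(a,b)$, yielding a fixed point of $f^{2n}$ there. You would need to add both the two-sided approximation and this dichotomy to close the gap.
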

\begin{proof}
Fix $p \in (a,b).$ Let us consider that in the natural order $<$ of the arc $[a,b]$ from $a$ to $b,$
we have $a < p < b.$ Take $a_1,b_1 \in [a,b]$ so that $a < a_1 < p < b_1 < b.$ Since the open arcs $(a_1,p)$ and
$(p,b_1)$ are open sets in $X$ and $\mbox{Per}(2^f)$ is dense in $2^X,$ there exist $A_1 \in \langle (a_1,p)\rangle
\cap \mbox{Per}(2^f)$ and $A_2 \in \langle (p,b_1) \rangle \cap \mbox{Per}(2^f).$ Then $A_1,A_2 \in 2^X,$
$A_1 \subset (a_1,p) \subset [a_1,b_1] \subset (a,b),$ $A_2 \subset (p,b_1) \subset [a_1,b_1] \subset (a,b)$
and there exist $n_1,n_2 \in \m{N}$ such that
$$
(2^f)^{n_1}(A_1) = f^{n_1}(A_1) = A_1 \hspace{.3cm} \mbox{and} \hspace{.3cm}
(2^f)^{n_2}(A_2) = f^{n_2}(A_2) = A_2.
$$
Let $n$ be the lowest common multiple of $n_1$ and $n_2.$ Then $f^n(A_1) = A_1$ and $f^n(A_2) = A_2.$
Note that if $A_1$ is finite then $(f^{n_1})_{|A_1} \colon A_1 \to A_1$ is a permutation of
the elements of $A_1$ and since every permutations of finite sets have
finite order, there exists $k \in \m{N}$ such that $(f^{n_1}|_{A_1})^k = \id_{A_1}.$ Hence
$A_1 \subset \Per(f)$ and then $\Per(f) \cap (a,b) \ne \emptyset.$ Similarly, if $A_2$ is
finite then, proceeding as before it follows that $\Per(f) \cap (a,b) \ne \emptyset.$ Hence we may
assume that both sets $A_1$ and $A_2$ are infinite.
Define
$$
c_1 = \min (A_1), \hspace{.2cm} d_1 = \max (A_1), \hspace{.2cm}  c_2 = \min (A_2) \hspace{.2cm}
\mbox{and} \hspace{.2cm} d_2 = \max (A_2).
$$
Then $c_1,d_1 \in A_1 \subset [c_1,d_1] \subset (a_1,p),$ $c_2,d_2 \in A_2 \subset [c_2,d_2] \subset (p,b_1)$
and $c_1 < d_1 < p < c_2 < d_2.$

We claim that
\begin{enumerate}
\item\label{thmg1:1} there exist $c_1^\prime,d_1^\prime \in [c_1,d_1]$ such that $(f^n)(c_1^\prime) = c_1,$
   $(f^n)(d_1^\prime) = d_1$ and $(c_1^\prime,d_1^\prime) \cap (f^n)^{-1}(\{c_1,d_1\}) = \emptyset.$
\end{enumerate}
To show \eqref{thmg1:1} let
$$
x = \max \left((f^n)^{-1}(\set{c_1}) \cap [c_1,d_1]\right) \hspace{.3cm} \mbox{and} \hspace{.3cm}
y = \min \left((f^n)^{-1}(\set{d_1}) \cap [c_1,d_1]\right).
$$
Then $x \ne y.$ If $x < y,$ we define $c_1^\prime = x$ and $d_1^\prime = y.$ Then $c_1^\prime < d_1^\prime$
and, by the definitions of $x$ and $y,$ the points $c_1^\prime$ and $d_1^\prime$ satisfy \eqref{thmg1:1}. Now assume that
$y < x.$ Define
$$
c_1^\prime = \min \left((f^n)^{-1}(\set{c_1}) \cap [y,x]\right) \hspace{.3cm} \mbox{and} \hspace{.3cm}
d_1^\prime = \max \left((f^n)^{-1}(\set{d_1}) \cap [y,c_1^\prime]\right).
$$
Then $d_1^\prime < c_1^\prime$ and the points $c_1^\prime,d_1^\prime$ satisfy \eqref{thmg1:1}. This completes the proof of \eqref{thmg1:1}.

Proceeding as in the proof of \eqref{thmg1:1}  we can show that:
\begin{enumerate}
\stepcounter{enumi}
\item\label{thmg1:2} there exist $c_2^\prime,d_2^\prime \in [c_2,d_2]$ such that $(f^n)(c_2^\prime) = c_2,$
   $(f^n)(d_2^\prime) = d_2$ and $(c_2^\prime,d_2^\prime) \cap (f^n)^{-1}(\{c_2,d_2\}) = \emptyset.$
\end{enumerate}
By the properties of the points $c_1^\prime$ and $d_1^\prime$ given in \eqref{thmg1:1}, Lemma~\ref{lem:nad} and the fact that $[c_1,d_1]$ is a
free arc in $X,$ the set $f^n\left([c_1^\prime,d_1^\prime]\right)$ satisfies one of the following two
conditions:
\begin{enumerate}
\setcounter{enumi}{2}
\item\label{thmg1:1.1} $f^n\left([c_1^\prime,d_1^\prime]\right) = [c_1,d_1];$
\item\label{thmg1:1.2} $f^n\left([c_1^\prime,d_1^\prime]\right)$ is a locally connected subcontinuum of $X$
     such that $c_1,d_1 \in f^n\left([c_1^\prime,d_1^\prime]\right)$
     and $f^n\left([c_1^\prime,d_1^\prime]\right) \cap (c_1,d_1) = \emptyset.$
\end{enumerate}
Similarly the set $f^n\left([c_2^\prime,d_2^\prime]\right) = [c_2,d_2]$ satisfies one of the following two
conditions:
\begin{enumerate}
\setcounter{enumi}{4}
\item\label{thmg1:2.1} $f^n\left([c_2^\prime,d_2^\prime]\right) = [c_2,d_2];$
\item\label{thmg1:2.2} $f^n\left([c_2^\prime,d_2^\prime]\right)$ is a locally connected subcontinuum of $X$ such that
     $c_2,d_2 \in f^n\left([c_2^\prime,d_2^\prime]\right)$ and $f^n\left([c_2^\prime,d_2^\prime]\right) \cap
     (c_2,d_2) = \emptyset.$
\end{enumerate}

Assume that $f^n\left([c_1^\prime,d_1^\prime]\right) = [c_1,d_1].$ Since $[c_1^\prime,d_1^\prime]$
is an arc contained in the arc $[c_1,d_1]$ such that $(f^n)(c_1^\prime) = c_1$ and $(f^n)(d_1^\prime) = d_1,$
there exists a fixed point of $f^n$ in $[c_1^\prime,d_1^\prime].$ This implies that $\mbox{Per}(f) \cap (a,b) \ne \emptyset.$
If $f^n\left([c_2^\prime,d_2^\prime]\right) = [c_2,d_2]$ then, proceeding as before, there exists
a fixed point of $f^n$ in $[c_2^\prime,d_2^\prime]$ and then $\mbox{Per}(f) \cap (a,b) \ne \emptyset.$

Let us assume now that conditions \eqref{thmg1:1.2} and \eqref{thmg1:2.2} hold. By \eqref{thmg1:1.2} and Lemma~\ref{lem:nad}
the set $f^n\left([c_1^\prime,d_1^\prime]\right)$ contains an arc $B_1$ with endpoints $c_1$ and $d_1$ so that
$B_1 \subset X \setminus (c_1,d_1).$ Since $[a,b]$ is a free arc in $X$ and the arcs in $[a,b]$ are unique, the subarc $[d_1,b]$ is
contained in $B_1.$ In particular
$$
[c_2^\prime,d_2^\prime] \subset [d_1,b] \subset B_1 \subset f^n\left([c_1^\prime,d_1^\prime]\right).
$$
Proceeding as before, using now \eqref{thmg1:2.2} we infer that there exist an arc $B_2$ with endpoints $c_2$ and $d_2$ so that
$B_2 \subset X \setminus (c_2,d_2).$ Hence
$$
[c_1^\prime,d_1^\prime] \subset [a,c_2] \subset B_2 \subset f^n\left([c_2^\prime,d_2^\prime]\right).
$$
It is not hard to see that
there exist $c_1^\star,d_1^\star \in [c_1^\prime,d_1^\prime]$ such that $f^n\left([c_1^\star,d_1^\star]\right) =
[c_2^\prime,d_2^\prime].$ There also exist $c_2^\star,d_2^\star \in [c_2^\prime,d_2^\prime]$ so that
$f^n\left([c_2^\star,d_2^\star]\right) = [c_1^\star,d_1^\star].$ Hence
$$
[c_1^\star,d_1^\star] = f^n\left([c_2^\star,d_2^\star]\right) \subset f^n\left([c_2^\prime,d_2^\prime]\right)
= f^n\left(f^n\left([c_1^\star,d_1^\star]\right)\right) = f^{2n}\left([c_1^\star,d_1^\star]\right)\subset (a,b).
$$
From this it follows that there is a fixed point of $f^{2n}$ in $[c_1^\star,d_1^\star].$ Hence
$\mbox{Per}(f) \cap (a,b) \ne \emptyset.$
\end{proof}

%Given a continuum $X$ let
%\begin{eqnarray*}
%\mathcal{G}(X) &=& \{p \in X \colon p \text{ has a neighborhood } M \mbox{ in } X \\
%               &&\quad\quad\quad\text{such that } M \text{ is a finite graph}\},\\
%\mathcal{FA}(X) &=& \bigcup \left\{\inte{J} \colon J \text{ is a free arc in } X \right\}
%\end{eqnarray*}
%\noindent and $\mathcal{P}(X) = X \setminus \mathcal{G}(X).$  By \cite[Lemma 1]{meshed},
%\begin{equation}\label{clos}
%\cl{\mathcal{G}(X)} = \cl{\mathcal{FA}(X)}.
%\end{equation}

Given a continuum $X$ let $\mathcal{FA}(X) = \bigcup \left\{\inte{J} \colon J \text{ is a free arc in } X \right\}$.
In [8], $X$ is defined to be \textit{almost meshed} if the set $\mathcal{FA}(X)$ is dense in $X.$ The class of almost meshed continua contains several interesting examples, among them dendrites with their
set of endpoints closed also dendrites with countable set of endpoints. Using this terminology, the following result can be proved by using Theorem 4.2.

\begin{thm}\label{densep2}
Let $X$ be an almost meshed continuum and $\fun{f}{X}{X}$ be a continuous map. If
$\mbox{Per}(2^f)$ is dense in $2^X,$ then $\mbox{Per}(f)$ is dense in $X.$
\end{thm}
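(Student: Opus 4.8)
The plan is to reduce the statement to Theorem~\ref{densep1} by exploiting the density of free-arc interiors that the almost-meshed hypothesis provides. Since $X$ is almost meshed, the set $\mathcal{FA}(X)=\bigcup\{\inte{J}:J\text{ a free arc in }X\}$ is dense in $X$. To prove that $\Per(f)$ is dense it suffices to show that every nonempty open set $U\subset X$ meets $\Per(f)$, so I would fix an arbitrary such $U$ and aim to produce inside it a free arc on whose open interior Theorem~\ref{densep1} can be invoked.

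First I would locate a free arc overlapping $U$. Because $\mathcal{FA}(X)$ is dense, there is a free arc $J$ with $\inte{J}\cap U\neq\emptyset$; fix a point $p$ in this open set. Writing $J=[s,t]$, we have $p\in(s,t)$, and $(s,t)$ is open in $X$ by the definition of a free arc.

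The key step is to extract a genuine free subarc $[a,b]$ with $a\neq b$ and $(a,b)\subset U$. The main (and essentially only) technical point is verifying that a small subarc of $\inte{J}$ is again free in $X$. Here one observes that since $(s,t)$ is open in $X$, the subspace topology that $X$ induces on $(s,t)$ coincides with the arc topology on $(s,t)\cong(0,1)$; hence for any points $a,b$ with $s<a<b<t$ the open subarc $(a,b)$ is open in $(s,t)$ and therefore open in $X$. Thus $[a,b]$ is a free arc, and by choosing $a,b$ close enough to $p$ we arrange $a\neq b$ and $(a,b)\subset \inte{J}\cap U\subset U$.

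Finally, Theorem~\ref{densep1} applies to the free arc $[a,b]$ (using that $\Per(2^f)$ is dense in $2^X$), yielding $\Per(f)\cap(a,b)\neq\emptyset$; since $(a,b)\subset U$, this gives $\Per(f)\cap U\neq\emptyset$. As $U$ was an arbitrary nonempty open set, $\Per(f)$ is dense in $X$. The only obstacle worth flagging is the elementary topological lemma that a subarc of the interior of a free arc is again a free arc; once that is in hand the result is a direct application of Theorem~\ref{densep1}.
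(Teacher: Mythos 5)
Your proposal is correct and follows essentially the same route as the paper: locate a point of $U$ in the dense set $\mathcal{FA}(X)$, shrink to a subarc $[a,b]\subset U\cap\inte{J}$, and apply Theorem~\ref{densep1}. The small technical point you flag (that a subarc of the interior of a free arc is again free) is used implicitly in the paper's proof as well, so there is no substantive difference.
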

\begin{proof}
Let $U$ be a nonempty open subset of $X.$ Since $X$ is almost meshed, the set
$\mathcal{FA}(X)$ is dense in $X.$ Then there exists $p \in U \cap \mathcal{FA}(X).$ Let
$J$ be a free arc in $X$ such that $p \in \inte{J}.$ Then $p \in U \cap \inte{J},$ so
there exist $a,b \in U \cap \inte{J}$ such that $p \in (a,b) \subset [a,b] \subset
U \cap \inte{J}.$ Then $a \ne b$ and, by Theorem \ref{densep1}, $\mbox{Per}(f) \cap (a,b) \ne
\emptyset.$ This implies that $\mbox{Per}(f) \cap U \ne \emptyset,$ so $\mbox{Per}(f)$ is
dense in $X.$
\end{proof}

Since finite graphs are almost meshed continua, as a corollary of Theorem \ref{densep2}
we obtain the following result.

\begin{cor}\label{densep3}
Let $X$ be a finite graph and $\fun{f}{X}{X}$ be a continuous function. If
$\mbox{Per}(2^f)$ is dense in $2^X,$ then $\mbox{Per}(f)$ is dense in $X.$
\end{cor}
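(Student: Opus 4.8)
The plan is to deduce the statement immediately from Theorem~\ref{densep2} by verifying the single missing ingredient: that every finite graph is an almost meshed continuum. Since being almost meshed requires only that $\mathcal{FA}(X)$ be dense in $X$, the whole task reduces to a purely topological observation about the structure of finite graphs, with no additional dynamical input needed beyond what Theorem~\ref{densep2} already provides.

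First I would recall that a finite graph $X$ is, by definition, the union of finitely many arcs any two of which meet only in one or both of their endpoints; the endpoints of these defining arcs form a finite set $V \subset X$ that contains all branch points and all endpoints of $X$. Each edge $e = [a,b]$ of the graph is then a free arc in $X$: every interior point of $e$ has a neighborhood lying entirely within $e$, because the only points through which one may pass from $e$ to the rest of $X$ are the vertices $a$ and $b$. Hence $e \setminus \{a,b\}$ is open in $X$, so $\inte{e} \supseteq e \setminus \{a,b\}$, and taking the union over all edges gives
$$
\mathcal{FA}(X) \supseteq \bigcup_{e} \inte{e} \supseteq \bigcup_{e}\left(e \setminus \{a,b\}\right) = X \setminus V,
$$
the last equality holding because a non-vertex point lies in the interior of exactly one edge.

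The key step is then to observe that $X \setminus V$ is dense. Taking $X$ non-degenerate (the trivial one-point case being vacuous, as there $\Per(f) = X$), the continuum $X$ has no isolated points, so deleting the finite set $V$ leaves a dense subset; thus $\mathcal{FA}(X)$ is dense and $X$ is almost meshed. Applying Theorem~\ref{densep2} to $X$ and $f$ then yields that $\Per(f)$ is dense in $X$, as required. I expect no genuine obstacle here; the only points demanding minor care are confirming that the open edges really satisfy the paper's definition of a free arc — which I would settle by invoking the defining property that distinct edges of a finite graph intersect only in endpoints, so that one cannot leave an edge except through a vertex — and separating off the degenerate single-point graph.
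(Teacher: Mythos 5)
Your proposal is correct and matches the paper's argument, which likewise derives the corollary immediately from Theorem~\ref{densep2} via the observation that finite graphs are almost meshed continua. You simply spell out the routine verification (edges are free arcs, their interiors cover the complement of the finite vertex set, which is dense) that the paper leaves implicit.
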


Corollary \ref{densep3} appeared first in \cite[Theorem 3.3, p. 474]{gengrong}. However the proof
seems to be incomplete. As it is presented in \cite{gengrong}, it works fine for \emph{trees}, i.e. finite
graphs that contains no copies of a circle.

\begin{cor}\label{densep3}
Let $(X,f)$ be a dynamical system on a dendrite $X$ with either countable or closed set of endpoints (i.e. $\End(X)=\cl{\End(X)}$). If
$\mbox{Per}(2^f)$ is dense in $2^X,$ then $\mbox{Per}(f)$ is dense in $X.$
\end{cor}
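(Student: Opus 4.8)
The plan is to deduce the statement from Theorem~\ref{densep2}: it suffices to prove that a dendrite $X$ whose endpoint set $\End(X)$ is either countable or closed is \emph{almost meshed}, and then invoke that theorem (whose second hypothesis, density of $\operatorname{Per}(2^f)$ in $2^X$, is exactly what we are given). Writing $B$ for the (always countable) set of branch points of $X$, I would first record the structural identity
\[
\mathcal{FA}(X)=X\setminus\overline{\,\End(X)\cup B\,}.
\]
Indeed, a point has an open arc neighbourhood precisely when it has a connected neighbourhood meeting neither $\End(X)$ nor $B$: a connected open set free of branch points and endpoints consists only of points of order $2$ and is therefore an open arc, while conversely the interior of a free arc is an open subset of $X$ disjoint from $B\cup\End(X)$. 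Consequently $X$ is almost meshed if and only if $\overline{\End(X)\cup B}$ is nowhere dense, and the whole corollary reduces to the claim: \textbf{if $X$ is not almost meshed, then $\End(X)$ is both uncountable and non-closed}; both hypotheses of the corollary then force $X$ to be almost meshed.

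So suppose $X$ is not almost meshed. Then there is a nonempty open set $U\subseteq\overline{\End(X)\cup B}$, i.e.\ $U$ contains no free arc. I claim $B$ is dense in $U$: if some nonempty open $W\subseteq U$ contained no branch point, then choosing an order-$2$ point $x\in W$ together with a connected neighbourhood of $x$ inside $W$ free of branch points would exhibit an open arc neighbourhood of $x$, contradicting $U\cap\mathcal{FA}(X)=\emptyset$. The key geometric engine is then: \emph{whenever branch points $b_n\to x$ with $x$ of order $2$, endpoints of $X$ accumulate at $x$.} Passing to a subsequence lying in one component of $X\setminus\{x\}$ and using that each $b_n$ has at least three local directions, one extracts pairwise disjoint nondegenerate ``hanging'' subdendrites $G_n$ (each the closure of a component of $X\setminus\{b_n\}$ not containing $x$); each $G_n$ contains an endpoint $e_n$ of $X$, Lemma~\ref{l2} gives $\diam(G_n)\to 0$, and since $b_n\to x$ this yields $e_n\to x$. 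Applying this to any non-branch point $x\in U$ (such points exist, as $B$ is countable while $U$ is uncountable) gives $x\in\overline{\End(X)}\setminus\End(X)$, so $\End(X)$ is not closed. This already settles the closed-endpoint case.

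For the countable case I would build a Cantor set of endpoints inside $U$. Starting from a small hanging subdendrite $K_\emptyset\subseteq U$ cut off from the rest of $X$ by a single point, I recurse using density of $B$ inside each piece: given $K_w\subseteq U$ (the closure of a component of $X\setminus\{p_w\}$), a branch point $c\in\operatorname{int}K_w$ has at least two local directions staying inside $K_w$, yielding two subdendrites $K_{w0},K_{w1}\subseteq K_w$, each cut off from the rest of $X$ by the single point $c$ and pairwise disjoint; by the accumulation engine and Lemma~\ref{l2} they can be chosen of diameter $\le 2^{-|w|}$. The resulting nested, diameter-shrinking, pairwise-separated family indexed by $\{0,1\}^{<\omega}$ produces for each branch $\xi\in\{0,1\}^{\omega}$ a point $z_\xi=\bigcap_n K_{\xi|n}$; since each $K_{\xi|n}$ is joined to the rest of $X$ through a single cut point converging to $z_\xi$, the point $z_\xi$ has order $1$, hence is an endpoint, and distinct branches give distinct points. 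Thus $\End(X)$ has cardinality $2^{\aleph_0}$, contradicting countability.

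The main obstacle is this last construction: one must simultaneously keep every piece $K_w$ inside the fixed open set $U$, guarantee the diameters tend to $0$ (this is where Lemma~\ref{l2} and the existence of arbitrarily small hanging parts at accumulating branch points are used), and verify that the limit points are genuine endpoints rather than merely order-$2$ points. The order-$1$ verification, via the fact that each $z_\xi$ is separated from the rest of $X$ by cut points tending to $z_\xi$, is the delicate step; once it is in place, the dichotomy ``not almost meshed $\Rightarrow$ $\End(X)$ uncountable and non-closed'' follows, and the corollary is immediate from Theorem~\ref{densep2}.
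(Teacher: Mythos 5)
Your proposal is correct and follows exactly the route the paper takes: the corollary is obtained by applying Theorem~\ref{densep2} once one knows that a dendrite with countable or closed endpoint set is almost meshed, a fact the paper simply quotes from \cite{meshed} rather than proving. The extra material you supply to establish that fact is sound in outline, though the step extracting \emph{pairwise disjoint} hanging subdendrites $G_n$ at branch points $b_n\to x$ is glossed (for arbitrary configurations one must first pass, via a chain/antichain dichotomy in the separation order relative to $x$, to a subfamily for which the chosen components really are disjoint before Lemma~\ref{l2} applies).
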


\section*{Acknowledgements}
The authors express many thanks to Dominik Kwietniak, Logan Hoehn, Habib Marzougui and Chris Mouron for fruitful discussions
on periodicity and mixing in maps on dendrites.

The research of G. Acosta and R. Hern\'andez-Guti\'erez was partially supported by the project "Hiperespacios topol\'ogicos (0128584)" of CONACYT, 2009, and the project "Teor\'ia de Continuos, Hiperespacios y Sistemas Din\'amicos" (IN104613) of PAPIIT, DGAPA, UNAM.
The research of I. Naghmouchi was supported by the research laboratory: syst\`emes dynamiques et combinatoire: 99UR15-15.
The research of P. Oprocha was supported by the Polish Ministry of Science and Higher Education from sources
for science in the years 2013-2014, grant  no. IP2012~004272.

\end{document}